\newtheorem{theo}{Theorem}[section]
\theoremstyle{definition}
\newtheorem*{claim}{Claim}
\newtheorem{rem}{Remark}[section]
\numberwithin{equation}{section}
\newcommand{\R}{\mathbb R}
\newcommand{\Rn}{\mathbb R^n}
\newcommand{\de}{\partial}
\newcommand{\eps}{\varepsilon}
\newcommand{\wS}{\overline w_{\eps,\delta}}
\newcommand{\ws}{\underline w_{\eps,\delta}}
\newcommand{\lto}{\left(}
\newcommand{\rto}{\right)}
\DeclareMathOperator{\divergenza}{div}
\DeclareMathOperator{\sign}{sign}
\newtoks\by
\newtoks\paper
\newtoks\book
\newtoks\jour
\newtoks\yr
\newtoks\pg
\newtoks\vol
\newtoks\publ
\def\ota{{\hbox\vol{???}}}
\def\cLear{\by=\ota\paper=\ota\book=\ota\jour=\ota\yr=\ota
\pg=\ota\vol=\ota\publ=\ota}
\def\endpaper{\the\by. \textit{\the\paper},
{\the\jour} \textbf{\the\vol} (\the\yr), \the\pg.\cLear}
\def\endbook{\the\by, \textit{\the\book}, \the\publ.\cLear}
\def\endprep{\the\by, \textit{\the\paper}, \the\jour.\cLear}
\def\endproc{\the\by, \textit{\the\paper}, \the\book, \the\publ,
\the\yr, \the\pg.\cLear}
\begin{document}

\title
{Blow-up solutions for some nonlinear elliptic equations involving a Finsler-Laplacian}
\author
{Francesco Della Pietra%
\thanks{Universit\`a degli studi di Napoli Federico II,
Dipartimento di Matematica e Applicazioni ``R. Caccioppoli'',
Via Cintia, Monte S. Angelo - 80126 Napoli, Italia. Email: f.dellapietra@unina.it} 
{ and} 
Giuseppina di Blasio%
\thanks{Seconda Universit\`a degli studi di Napoli,
Dipartimento di Ma\-te\-ma\-ti\-ca e Fisica, 
Via Vivaldi, 43 - 81100 Caserta, Italia. Email: giuseppina.diblasio@unina2.it}
}
\date{\today}
\pagestyle{scrheadings} 
\maketitle
\begin{abstract}
In this paper we prove existence results and asymptotic behavior for strong solutions $u\in W^{2,2}_{\textrm{loc}}(\Omega)$ of the nonlinear elliptic problem
\begin{equation}
\tag{P}
\label{abstr}
\left\{
\begin{array}{ll}
-\Delta_{H}u+H(\nabla u)^{q}+\lambda u=f&\text{in }\Omega,\\
u\rightarrow +\infty &\text{on }\de\Omega,
\end{array}
\right.
\end{equation}
where $H$ is a suitable norm of $\R^{n}$, $\Omega$ is a bounded domain, $\Delta_{H}$ is the Finsler Laplacian, $1<q\le 2$, $\lambda>0$ and $f$ is a suitable function in $L^{\infty}_{\textrm{loc}}$. Furthermore, we are interested in the behavior of the solutions when $\lambda\rightarrow 0^{+}$, studying the so-called ergodic problem associated to \eqref{abstr}. A key role in order to study the ergodic problem will be played by local gradient estimates for \eqref{abstr}.\\

\noindent 
{\small {\itshape MSC 2010}: 35J60; 35J25; 35B44}\\
{\small {\itshape Keywords}: 
Anisotropic elliptic problems, Finsler Laplacian, Blow-up solutions}\\
\end{abstract}
\section{Introduction}
Let $\Omega$ be a $C^{2}$ bounded domain of $\R^{n}$, $n\ge 2$, and let us consider the following Finsler-Laplacian of $u$, namely the operator $\Delta_{H} u$ defined as
\[
\Delta_{H} u =\sum_{i=1}^{n}\frac{\de }{\de x_{i}}\big(H(\nabla u)H_{\xi_{i}}(\nabla u)\big),
\]
where $H$ is a suitable smooth norm of $\R^{n}$ (see Section \ref{2.1} for the precise assumptions). The aim of the paper is to study the existence of solutions of the equation
\begin{equation}
  \label{eq:1}
	-\Delta_H u + H(\nabla u)^q+\lambda u= f(x)\quad  \text{ in }\Omega
	\end{equation}
where $1<q\le 2$, $\lambda>0$ and $f$ is a suitable function in $L^{\infty}_{\textrm{loc}}$, bounded from below, with the boundary condition
\begin{equation}
\label{boundcond}
	\lim_{x\rightarrow\de \Omega} u(x)= + \infty.
\end{equation}
We will refer to the solutions of \eqref{eq:1} which satisfy \eqref{boundcond} as blow-up solutions. We are also interested in the asymptotic behavior of the solutions. Moreover, we study the behavior of the blow-up solutions of \eqref{eq:1} when $\lambda\rightarrow 0^{+}$.

Problems which deal with Finsler-Laplacian type operators have been studied in several contexts (see, for example, \cite{aflt,bfk,fk08,ciasal,wxjde,cfv,dpg4,pota,deth,j15}).

When $H$ is the Euclidean norm, namely $H(\xi)=|\xi|=\sqrt{\sum\xi_{i}^{2}}$, blow-up problems for equations depending on the gradient have been studied by many authors. We refer the reader, for example, to \cite{ll89,bg,gnr,pv,l07,p10,bpt,fgmp13}. In the Euclidean setting, problem \eqref{eq:1}-\eqref{boundcond} reduces to
\begin{equation}
  \label{eq:1ll}
 \left\{
 \begin{array}{l}
	-\Delta u + |\nabla u|^q+\lambda u= f(x)\quad  \text{ in }\Omega,\\
	\displaystyle\lim_{x\to \de\Omega}u(x)=+\infty.
 \end{array}
\right.
\end{equation}
The interest in problems modeled by \eqref{eq:1ll} has been grown since the seminal paper by Lasry and Lions \cite{ll89}. The equation in \eqref{eq:1ll} is a particular case of Hamilton-Jacobi-Bellman equations, which are related to stochastic differential problems. Indeed, in \cite{ll89} the authors enlightened the relation between problem \eqref{eq:1ll} and a model of stochastic control problem involving constraints on the state of the system by means of unbounded drifts. We briefly recall a few facts about this link. 

Let us consider the stochastic differential equation
\[
dX_{t}=a(X_{t}) dt+ dB_{t}\;, \qquad X_{0}=x\in \Omega,
\]
where $B_{t}$ is a standard Brownian motion. We assume that $a(\cdot)\in \mathcal A$, where $\mathcal A$ is the class of feedback controls such that the state process $X_{t}$, solution to the above SDE, remains in $\Omega$ with probability $1$, for all $t\ge 0$ and for any $x\in\Omega$. 
Thanks to the dynamic programming principle due to Bellman, the function $u_{\lambda}\in W^{2,r}_{\textrm{loc}}(\Omega)$, $r<\infty$ which solves \eqref{eq:1ll}  can be represented as the value function
\[
u_{\lambda} = \inf_{a\in\mathcal A} E\int_{0}^{\infty} \left[ f(X_{t})+ c_{q}|a(X_{t})|^{q'} \right]e^{-\lambda t}dt
\]
where $E$ is the expected value, $1<q\le 2$, $q'=\frac{q}{q-1}$, 
$c_{q}=(q-1)q^{-q'}$, and $e^{-\lambda t}$ is a discount factor. 

In \cite{ll89} there are several results regarding the existence, uniqueness and asymptotic behavior of the solutions of \eqref{eq:1ll}. 

When $\lambda$ tends to zero, the limit of $\lambda u_{\lambda}$ is known as ergodic limit. This kind of problems have been largely studied (see, for example, \cite{bf87,ll89,bf92,p10,fgmp13}). A typical result states that $\lambda u_{\lambda}$ tends to a value $u_{0}\in\R$ and $u_{\lambda}(x)-u_{\lambda}(x_{0})$, for fixed $x_{0}\in\Omega$, tends to a function $v$ which solves
\begin{equation}
  \label{eq:euclid-erg}
 \left\{
 \begin{array}{l}
	-\Delta v + |\nabla v|^q+u_{0}= f(x)\quad  \text{ in }\Omega,\\
	\displaystyle\lim_{x\to \de\Omega}v(x)=+\infty.
 \end{array}
\right.
\end{equation}
Problem \eqref{eq:euclid-erg} is seen as the ergodic limit, as $\lambda\to0^{+}$, of the stochastic control problem just destribed.

The scope of the present paper is to obtain existence, uniqueness and asymptotic behavior of the solutions to problem \eqref{eq:1}-\eqref{boundcond}, in the spirit of the work by Lasry and Lions \cite{ll89}, when $H$ is a general norm of $\Rn$. 

The interest in this kind of problems is twofold. First, in analogy with the relation between the quoted SDE and the elliptic problem \eqref{eq:1ll}, we stress that the Finsler Laplacian $\Delta_{H}$ can be interpreted as the generator of a ``h-Finslerian diffusion'', which generalizes the standard Brownian motion in $\R^{n}$. Stochastic processes of this type arise in some Biology problems, as in the theory of evolution by endo-symbiosis in which modern cells of plants and animals arise from separately living bacterial species. We refer the reader to \cite{antonelli1,antonelli2} (and to the bibliography cited therein) for the stochastic interpretation of $\Delta_{H}$ and for the quoted applications. Second, apart from the stochastic motivation, the nonlinear elliptic problem we study is of interest in its own right. In our case, the operator in  \eqref{eq:1} is, in general, anisotropic and quasilinear, with a strong nonlinearity in the gradient, and generalizes to this setting some extensively studied problems in the isotropic case. Actually, this brings several difficulties and differencies with respect to the Euclidean case. Moreover, in \cite{ll89} the asymptotic behavior of the solutions of \eqref{eq:1ll} near to the boundary of $\Omega$ is strongly related to a precise behavior of $f$ with respect to the distance to $\de\Omega$. In our case, the anisotropy of the operator leads to use an appropriate distance function to the boundary related to $H$. On the other hand, unless $H=|\cdot|$, the function $\nabla H(\xi)$ is always discontinuous in $\xi=0$. Hence, also giving smoothess assumptions on $H$ and on the data, it is not possible to apply classical Calder\'on-Zygmund type regularity results to get strong solutions in $W^{2,r}_{\textrm{loc}}(\Omega)$, $r<\infty$. We deal, in fact, only with solutions in $W^{2,2}_{\textrm{loc}}(\Omega)$. Furthermore, this lack of regularity does not permit to obtain, in general, the same gradient estimates for the solutions of \eqref{eq:1}-\eqref{boundcond} proved in the Euclidean case, which play a central role in the study of the ergodic problem. Actually, we are able to treat also the case $\lambda\rightarrow 0^{+}$, obtaining existence results for the limit problem
\begin{equation*}
 \left\{
 \begin{array}{l}
	-\Delta_{H} v + H(\nabla v)^q+u_{0}= f(x)\quad  \text{ in }\Omega,\\
	\displaystyle\lim_{x\to \de\Omega}v(x)=+\infty,
 \end{array}
\right.
\end{equation*}
and some properties of the ergodic constant $u_{0}$. We refer the reader to Section \ref{2.3} for the complete scheme of the obtained results. 

The paper is organized as follows. 

In Section \ref{2} we give the precise assumptions on $H$ and recall some basic facts of convex analysis. Moreover, we state our results. In Section \ref{3} we prove some a priori estimates for the gradient. Finally, in Section \ref{4} we give the proof of the main results.

\section{Assumptions, main results and comments}
\label{2}
\subsection{Notation and preliminaries}\label{2.1}
Throughout the paper we will consider a function 
\[
\xi\in \R^{n}\mapsto H(\xi)\in [0,+\infty[,
\] 
convex, $1$-homogeneous, that is
\begin{equation}
\label{1hom}
H(t\xi)=|t|H(\xi), \quad t\in \R,\,\xi \in \R^{n}, 
\end{equation}
 and such that
\begin{equation}
\label{crescita}
a|\xi| \le H(\xi),\quad \xi \in \R^{n},
\end{equation}
for some constant $0<a$. 
Under this hypothesis it is easy to see that there exists $b\ge a$ such that
\[
H(\xi)\le b |\xi|,\quad \xi \in \R^{n}.
\]
Moreover, we will assume that 
\begin{equation}
\label{strong}
H^{2}\in C^{3}(\R^n\setminus\{0\}),\text{ and }\nabla^{2}_{\xi}H^{2}\text{ is positive definite in }\R^{n}\setminus\{0\}.
\end{equation}
In all the paper we will denote with $\Omega$ a set of $\R^{n}$, $n\ge 2$ such that
\begin{equation}
\label{ip:omega}
\Omega\text{ is a bounded connected open set with }C^{2}\text{ boundary.}
\end{equation}

The hypothesis \eqref{strong} on $H$ assure that the operator $\Delta_{H}$ is elliptic, hence there exists a positive constant $\gamma$ such that
\begin{equation}
\label{convex}
\gamma |\xi|^{2}\le \sum_{i,j=1}^{n}{\dfrac{\de}{\de \xi_j}
  \big( H(\eta)H_{\xi_i}(\eta)\big)\xi_i\xi_j},
\end{equation}
for any $\eta \in \Rn\setminus\{0\}$ and for any $\xi=(\xi_{1},\ldots,\xi_{n})\in \R^n$.

We will consider as solutions of equation \eqref{eq:1} the strong solutions, namely functions $u\in W_{\textrm{loc}}^{2,2}(\Omega)$ such that the equality in $\eqref{eq:1}$ holds almost everywhere in $\Omega$. 

In this context, an important role is played by the polar function of $H$, namely the function $H^{o}$ defined as
\[
x\in \R^{n}\mapsto H^{o}(x)= \sup_{\xi \not=0}\frac{\xi\cdot x}{H(\xi)}.
\]
It is not difficult to verify that $H^{o}$ is a convex, $1$-homogeneous function that satisfies \eqref{crescita} (with different constants). Moreover,
\[
H(x)=\lto H^{o}\rto^{o}(x)= \sup_{\xi \not=0}\frac{\xi\cdot x}{H^{o}(\xi)}.
\]
The assumption \eqref{strong} on $H^{2}$ implies that $\{\xi\in\R^{n}\colon H(\xi)< 1\}$ is   strongly convex, in the sense that it is a $C^{2}$ set and all the principal curvatures are strictly positive functions on $\{\xi\colon H(\xi)=1\}$. This ensures that $H^{o}\in C^{2}(\R^{n}\setminus\{0\})$ (see \cite{schn} for the details).

The following well-known properties hold true:
\begin{align}
 &H_\xi(\xi)\cdot \xi = H(\xi),\quad \xi\ne 0, \label{eq:eul} \\
 &H_\xi ( t \xi ) = \sign t \cdot H_\xi(\xi), \quad \xi \ne 0, \, t \ne 0, \label{0-om}\\
 &\nabla^{2}_{\xi}H(t\xi)=\frac{1}{|t|}\nabla^{2}_{\xi}H(\xi)\quad\xi \ne 0, \, t \ne 0,\\
 & H\lto H_\xi^o(\xi)\rto=1,\quad \forall \xi \not=0, \label{eq:H1} \\
 &H^o(\xi)\, H_{\xi}\lto H^o_{\xi}(\xi) \rto = \xi \quad \forall \xi \not=0.  \label{eq:HH0}
\end{align} 
Analogous properties hold interchanging the roles of $H$ and $H^{o}$. 

The open set
\[
\mathcal W = \{  \xi \in \R^n \colon H^o(\xi)< 1 \}
\]
is the so-called Wulff shape centered at the origin.
More generally, we denote 
\[
\mathcal W_r(x_0)=r\mathcal W+x_0=\{x\in \R^{2}\colon H^o(x-x_0)<r\},
\] 
and $\mathcal W_r(0)=\mathcal W_r$.
\subsection{Anisotropic distance function}
Due to the nature of the problem, it seems to be natural to consider a suitable
notion of distance to the boundary. The anisotropic distance of $x\in\bar\Omega$ to the boundary of $\de \Omega$ is the function 
\begin{equation}
\label{defdist}
d_{H}(x)= \inf_{y\in \de \Omega} H^o(x-y), \quad x\in \bar\Omega.
\end{equation}
It is not difficult to prove that $d_H\in W^{1,\infty}(\Omega)$. Moreover, the property \eqref{eq:H1} gives that the $d_H(x)$
satisfies
\begin{equation}
  \label{Hd}
  H(\nabla d_H(x))=1 \quad\text{a.e. in }\Omega.
\end{equation}
Furthermore, if $\de\Omega$ is $C^2$, then $d_H$ is $C^2$ in a suitable neighborhood of $\de \Omega$ in $\bar\Omega$ 
(see \cite{cm07}).

Since $\de\Omega$ is $C^2$, it is possible to extend $d_H$ outside $\bar \Omega$ to a function which is still $C^2$ in a suitable neighborhood of $\de \Omega$ in $\R^n$. Indeed, let 
\[
\widetilde d_H(x)= \inf_{y\in \de \Omega} H^o(x-y), \quad x\in \R^n\setminus\Omega,
\]
and define the signed anisotropic distance function $d^s_H$ as
\begin{equation}
\label{disegno}
d^s_H(x)=\left\{
\begin{array}{ll}
d_H(x) &\text{ if }x\in \bar \Omega  \\[.1cm]
-\widetilde d_H(x) & \text{ if }x\in \R^n\setminus\bar\Omega.
\end{array}
\right.
\end{equation}
The following result is proved in \cite{cm07}.
\begin{theo}
\label{cmteo}
Let $\Omega$ be as in \eqref{ip:omega}. Then there exists $\mu>0$ such that $d^s_H$ is $C^2(A_\mu)$, with $A_\mu=\{x\in\R^n\colon -\mu<d^s_H(x)<\mu\}$.
\end{theo}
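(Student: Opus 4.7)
The plan is to adapt the classical tubular-neighborhood construction of the signed distance function to the Finsler setting. Denote by $\nu(y)$ the outward unit Euclidean normal to $\de\Omega$ at $y$, which is $C^{1}$ on $\de\Omega$ since $\de\Omega\in C^{2}$. The key object is the \emph{anisotropic normal flow}
\[
\Phi(y,t)=y-t\,H_{\xi}(\nu(y)),\qquad (y,t)\in \de\Omega\times (-\mu_{0},\mu_{0}),
\]
for some small $\mu_{0}>0$. By \eqref{strong}, $H_{\xi}\in C^{1}(\Rn\setminus\{0\})$, and since $\nu$ never vanishes, $\Phi$ is $C^{1}$. To see that $\Phi$ is a local diffeomorphism near $\{t=0\}$ I would check that $d\Phi_{(y,0)}$ restricts to the identity on $T_{y}\de\Omega$, while its $t$-component is $-H_{\xi}(\nu(y))$; by Euler's identity \eqref{eq:eul} one has $H_{\xi}(\nu(y))\cdot \nu(y)=H(\nu(y))\ge a>0$, so this vector is transverse to $\de\Omega$ and $d\Phi_{(y,0)}$ is invertible. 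The inverse function theorem then yields some $\mu>0$ such that $\Phi\colon\de\Omega\times(-\mu,\mu)\to A_{\mu}$ is a $C^{1}$-diffeomorphism; denote $\Phi^{-1}(x)=(y(x),t(x))$.

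The second step is to identify $t(x)$ with $d^{s}_{H}(x)$ on $A_{\mu}$. For $x=\Phi(y,t)$ with $0<t<\mu$, Lagrange multipliers applied to $z\mapsto H^{o}(x-z)$ on $\de\Omega$ tell us that any critical point $z$ satisfies $H^{o}_{\xi}(x-z)\parallel \nu(z)$; combining this proportionality with \eqref{eq:H1}, the $0$-homogeneity \eqref{0-om} of $H_{\xi}$, and \eqref{eq:HH0}, one obtains $x-z=-H^{o}(x-z)\,H_{\xi}(\nu(z))$. Comparing with $x-y=-t\,H_{\xi}(\nu(y))$ produces $y$ as a critical point with $H^{o}(x-y)=t$. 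Strict convexity of the Wulff shape (a consequence of \eqref{strong}) together with the $C^{2}$-regularity of $\de\Omega$ ensure, for $\mu$ small enough, that this critical point is unique and is indeed the minimum, so $d_{H}(x)=t(x)$. The analogous computation on the outside using $\widetilde d_{H}$ gives $-\widetilde d_{H}=t$ there, so $t=d^{s}_{H}$ on the whole $A_{\mu}$ by \eqref{disegno}.

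The bootstrap from $C^{1}$ to $C^{2}$ then follows from an envelope-theorem computation, namely
\[
\nabla d^{s}_{H}(x)=\frac{\nu(y(x))}{H(\nu(y(x)))},
\]
which is a composition of $C^{1}$ maps---$y(\cdot)\in C^{1}(A_{\mu})$ by the first step, $\nu\in C^{1}(\de\Omega)$, and $H\in C^{3}(\Rn\setminus\{0\})$ by \eqref{strong}---hence $\nabla d^{s}_{H}\in C^{1}(A_{\mu})$, i.e.\ $d^{s}_{H}\in C^{2}(A_{\mu})$. The main obstacle is the uniqueness of the nearest point in the second step: ruling out secondary minimizers requires choosing $\mu$ small in a quantitative way that simultaneously controls the principal curvatures of $\de\Omega$ (finite, since $\de\Omega\in C^{2}$) and the ellipticity constants of $\nabla^{2}H^{2}$ on the unit sphere (uniformly positive by \eqref{strong}). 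This is the technical heart of the construction carried out in \cite{cm07}.
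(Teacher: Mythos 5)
First, a structural remark: the paper does not prove Theorem \ref{cmteo} at all --- it is quoted from Crasta--Malusa \cite{cm07} --- so there is no internal proof to compare against. Your sketch reconstructs the standard tubular-neighbourhood argument in anisotropic form (the normal flow $\Phi(y,t)=y-t\,H_{\xi}(\nu(y))$, invertibility of $d\Phi$ at $t=0$ via Euler's identity \eqref{eq:eul}, identification of the flow parameter with the signed distance through the Lagrange-multiplier/duality computation based on \eqref{eq:H1}, \eqref{0-om}, \eqref{eq:HH0}, and a bootstrap through the gradient formula), which is indeed the spirit of the construction in \cite{cm07}; in that sense you supply more detail than the paper itself, and the outline is sound.

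Two points, however. (i) Sign: with $\nu$ the \emph{outward} Euclidean normal and $d^{s}_{H}>0$ inside $\Omega$, the envelope formula must read $\nabla d^{s}_{H}(x)=-\nu(y(x))/H(\nu(y(x)))$, not $+\nu/H(\nu)$; equivalently $\nabla d_{H}(x)=H^{o}_{\xi}(x-\bar y_{x})$, which is the formula the paper itself imports from \cite[Prop. 3.3]{cm07}. A quick check is the Euclidean case, where $\nabla d=-\nu(y(x))$, or differentiating $d^{s}_{H}(\Phi(y,t))=t$ in $t$ and using \eqref{eq:eul}: your sign would give $-1$ on the right-hand side. The slip is harmless for the regularity conclusion, since the corrected expression is still a composition of $C^{1}$ maps, but it should be fixed. (ii) The step $t(x)=d^{s}_{H}(x)$ on a \emph{uniform} neighbourhood --- uniqueness of the anisotropic projection and global injectivity/surjectivity of $\Phi$ for a single $\mu>0$, uniformly over the compact boundary (no focal points within distance $\mu$, controlled by the principal curvatures of $\de\Omega$ and the ellipticity of $\nabla^{2}_{\xi}H^{2}$ from \eqref{strong}) --- is only asserted, and you yourself call it the technical heart and defer it to \cite{cm07}. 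Since that step is essentially the content of the theorem, as a self-contained proof your proposal is incomplete there; as a reduction to \cite{cm07} it is acceptable, and no less than what the paper does, but be explicit that this is a citation rather than an argument.
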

\subsection{Main results}\label{2.3}
The first result concerns the case when $f$ blows up at the boundary at most as $d_{H}(x)^{-q'}$, with $q'=q/(q-1)$.
\begin{theo}
\label{fpocodiv}
Let $f\in L^{\infty}_{\emph{loc}}(\Omega)$ bounded from below and such that
\begin{equation}
\label{condf1}
\lim_{{d_{H}(x)\rightarrow 0}} f(x) d_{H}(x)^{q'} = C_{1},\quad\text{ for some }0\le C_{1}<+\infty.
\end{equation}
Then, there exists a unique solution $u\in W^{2,2}_{\emph{loc}}(\Omega)$ of \eqref{eq:1} such that $u$ blows up at $\de \Omega$. Moreover, any subsolution $v\in W_{\emph{loc}}^{2,2}(\Omega)$ of \eqref{eq:1} is such that $u\ge v$ in $\Omega$. Finally, if $C_{0}$ is the unique positive solution of
$ \left(\frac{2-q}{q-1} \right)^{q}C_{0}^{q}-\frac{2-q}{(q-1)^{2}}C_{0}-C_{1}=0$ if $q<2$, $C_{0}^{2}-C_{0}-C_{1}=0$ if $q=2$, then
\begin{equation}
\label{roc}
u(x)\sim
\begin{cases}
  \dfrac{C_{0}}{d_{H}(x)^{\frac{2-q}{q-1}}} & \text{if }q<2,  \\[.6cm]
 {C_{0}\log\frac{1}{d_{H}(x)}} & \text{if }q=2,
\end{cases}
\end{equation}
as $d_{H}(x)\rightarrow 0$.
\end{theo}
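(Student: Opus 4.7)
The plan is to follow the Lasry–Lions strategy but with barriers tailored to the anisotropic distance $d_H$. I would first prove existence for an approximating family of problems on subdomains with finite boundary data, then construct explicit super/sub–solutions near $\partial\Omega$ based on $d_H$, use these to bound the approximating sequence independently of the approximation parameter, pass to the limit using the gradient estimates of Section \ref{3} together with standard elliptic regularity to obtain a $W^{2,2}_{\mathrm{loc}}$ solution, and finally read off both uniqueness/comparison and the asymptotic \eqref{roc} by sandwiching $u$ between matching barriers.

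For the approximation I would fix an exhaustion $\Omega_n\uparrow\Omega$ by smooth subdomains (or keep $\Omega$ but impose $u=n$ on $\partial\Omega$) and solve the Dirichlet problem $-\Delta_H u_n+H(\nabla u_n)^q+\lambda u_n=f$ with $u_n=n$ on the approximating boundary, obtaining $u_n\in W^{2,2}_{\mathrm{loc}}$ by a standard sub/supersolution–plus–fixed point argument in the uniformly elliptic setting away from $\nabla u=0$; the global a priori $L^\infty$ bound on compact subsets uses the local gradient estimates quoted from Section \ref{3}, which in turn yields that the $u_n$ converge (up to a subsequence) in $C^1_{\mathrm{loc}}$ and then in $W^{2,2}_{\mathrm{loc}}$, giving a strong solution $u$ of \eqref{eq:1}.

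The core of the argument is the barrier construction. For a radial-in-$d_H$ ansatz $w=\phi(d_H)$, using $H(\nabla d_H)=1$ together with the $0$-homogeneity of $H_\xi$ and Euler's identity \eqref{eq:eul}, one computes
\begin{equation*}
-\Delta_H w+H(\nabla w)^q+\lambda w
=-\phi''(d_H)-\phi'(d_H)\Delta_H d_H+|\phi'(d_H)|^q+\lambda\phi(d_H).
\end{equation*}
Plugging in $\phi(t)=C t^{-(2-q)/(q-1)}$ (resp.\ $\phi(t)=C\log(1/t)$ when $q=2$) produces leading order $t^{-q'}$ on the right, and the coefficient matches exactly the algebraic equation defining $C_0$ in the statement. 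Taking $C=C_0\pm\varepsilon$ and absorbing the lower–order terms (namely the $\phi'\Delta_H d_H$ contribution, which is $o(t^{-q'})$ since $\Delta_H d_H$ is bounded near $\partial\Omega$ by Theorem \ref{cmteo}, plus the $\lambda\phi$ term) in a sufficiently thin tubular neighborhood $\{d_H<\delta\}$, and adding a large constant to handle the interior, yields a supersolution $\bar w_\varepsilon\ge u_n$ for all $n$ and a subsolution $\underline w_\varepsilon\le u_n$ by comparison. Here the comparison principle applies because \eqref{strong}–\eqref{convex} make the linearized operator uniformly elliptic on sets where $\nabla u\neq 0$, and the strictly monotone zeroth order term $\lambda u$ with $\lambda>0$ eliminates any ambiguity at critical points of $u-v$.

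Passing to the limit $n\to\infty$ preserves these bounds, so the limit $u$ satisfies $u\to +\infty$ at $\partial\Omega$ and lies between $\bar w_\varepsilon$ and $\underline w_\varepsilon$ in $\{d_H<\delta_\varepsilon\}$; letting $\varepsilon\to 0$ gives the sharp asymptotic \eqref{roc}. Uniqueness, as well as the statement that every $W^{2,2}_{\mathrm{loc}}$–subsolution $v$ is dominated by $u$, follow by applying the same comparison principle to $u$ against $v$ on $\Omega_n$, using that the blow–up of $u$ on $\partial\Omega$ dominates any prescribed boundary value of $v|_{\partial\Omega_n}$ in the limit. The main obstacle is twofold: first, because $\nabla H(\xi)$ is discontinuous at $\xi=0$, I cannot invoke Calderón–Zygmund to upgrade regularity and must live entirely within $W^{2,2}_{\mathrm{loc}}$, so compactness for the $u_n$ must come from the \emph{local} gradient estimates of Section \ref{3} rather than from $W^{2,r}$ bounds; second, the barrier computation must be sharp enough that the lower–order errors (depending on $H$ only through $\Delta_H d_H$ and the anisotropic geometry near $\partial\Omega$) are absorbed uniformly in the $\lambda\to 0^+$ application later, which forces one to isolate the precise leading coefficient via the algebraic equation defining $C_0$.
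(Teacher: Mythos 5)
Your overall scaffold (anisotropic barriers $\phi(d_H)$ with the exponent $\alpha=\frac{2-q}{q-1}$ and the algebraic equation for $C_0$, approximation by finite boundary data, comparison, passage to the limit) is the same as the paper's, and your barrier computation is correct. But two steps, as written, have genuine gaps. The most serious is uniqueness: you propose to compare $u$ against another blow-up solution (or a subsolution) ``on $\Omega_n$, using that the blow-up of $u$ dominates any prescribed boundary value in the limit.'' This does not work: two blow-up solutions are both finite on $\partial\Omega_n$ and neither dominates the other there, so the comparison principle gives nothing for fixed $n$, and the limit does not repair it. What your construction actually yields is only the \emph{minimal} blow-up solution $\underline u$ (limit of the problems with boundary data $n$). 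The paper must in addition build a \emph{maximal} solution $\overline u$ as the limit of blow-up solutions on the shrunk domains $\Omega_\delta$ (these dominate any solution, and any $W^{2,2}_{\rm loc}$ subsolution, because such functions are locally bounded near $\partial\Omega_\delta$), and then prove $\overline u\le\underline u$ by the Lasry--Lions trick: the common sandwich between $(C_0\mp\eps)d^{-\alpha}\mp C_\eps$ forces $\overline u/\underline u\to 1$ at $\partial\Omega$, the function $w_\theta=\theta\,\overline u+(1-\theta)\lambda^{-1}\inf f$ is a subsolution for $\theta\in]0,1[$ (here $\lambda>0$ is essential), it lies below $\underline u$ near $\partial\Omega$, comparison in the interior gives $w_\theta\le\underline u$ in $\Omega$, and $\theta\to1$ concludes. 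This convexity/scaling idea, and the maximal-solution construction it requires, are absent from your argument; without them uniqueness and the comparison with arbitrary subsolutions are not established.

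The second gap is the compactness step. You invoke the gradient estimates of Section \ref{3}, but Theorem \ref{gradbound} (and its constant) requires $f\in C^{1,\vartheta}_{\rm loc}$, or at least $W^{1,\infty}_{\rm loc}$ with control of $\nabla f$, whereas Theorem \ref{fpocodiv} only assumes $f\in L^{\infty}_{\rm loc}$; moreover the paper's Remark \ref{remgradbound} deduces the extended gradient bound \emph{from} the proof of Theorem \ref{fpocodiv}, not the other way around, so your route is both unavailable under the stated hypotheses and circular relative to the paper's logic. The correct mechanism is: the barrier sandwich gives uniform $L^{\infty}_{\rm loc}$ bounds for the approximating solutions, and then Tolksdorf-type interior $C^{1,\vartheta}$ and $W^{2,2}$ estimates (which depend only on the local $L^\infty$ bound of the solution and of $f$) give the compactness and the strong-solution property; the degeneracy of $\nabla H$ at $0$ is exactly why one stays in $W^{2,2}_{\rm loc}$. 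Two smaller points in the same spirit: the comparison principle for this quasilinear operator with up-to-quadratic gradient growth is not just ``uniform ellipticity away from $\nabla u=0$ plus $\lambda>0$''; the paper has to invoke the Barles--Blanc--Georgelin--Kobylanski result and verify its structure condition via the $0$-homogeneity of $(HH_\xi)_\xi$. And when $C_1>0$ in \eqref{condf1} the datum $f$ is unbounded, so the approximating Dirichlet problems (and the subsolution $\underline w_{\eps,\delta}$) require a truncation of $f$ of the type $f_\delta=\min\{f,\,C_2+C_3(d+\delta)^{-q'}\}$ before the bounded-data existence theory applies; your sketch does not address this.
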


The second main result we are able to prove is the case in which $f$
blows up very fast on $\de \Omega$.
\begin{theo}
\label{fmoltodiv}
	Let $\Omega$ be a bounded domain of $\R^{n}$, and suppose that $f\in L^{\infty}_{\emph{loc}}(\Omega)$ is bounded from below and satisfies
	\begin{equation}
	\label{fmoltodivcond}
	\liminf_{d_{H}\rightarrow 0} f(x)d_{H}^{\,\beta}(x) >0, \quad\text{for some }\beta\ge q'.
	\end{equation}
	Then, any solution $u\in W^{2,2}_{\emph{loc}}(\Omega)$ of \eqref{eq:1} bounded from below blows up at $\de \Omega$. Moreover there exists a maximum solution of \eqref{eq:1} in $W_{\emph{loc}}^{2,2}(\Omega)$ and, among all the solutions bounded from below in $\Omega$, there exists a minimum one which is the increasing limit of sequences of subsolutions of \eqref{eq:1}.
	
	If in addition there exists $C_{1}>0$ such that
	\begin{equation}
	\label{boh2}
f(x)\sim \frac{C_{1}}{d_{H}^{\,\beta}(x)},\quad \text{for some }\beta>q',
	\end{equation}
	then the blow up solution $u$ is unique and, as $d_{H}(x)\rightarrow 0$,
	\[
	u(x)\sim
  		\dfrac{C_{0}}{d_{H}(x)^{\frac \beta q -1}},
\]
with $C_{0}=(\alpha^{-1}C_{1})^{1/q}$.
\end{theo}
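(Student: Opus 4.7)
My plan is to follow the Lasry--Lions strategy in the Finsler setting, using as building blocks radial barriers of the form $\varphi(d_H(x))$ in the tubular neighborhood $A_\mu$ of $\de\Omega$ in which, by Theorem \ref{cmteo}, $d_H$ is $C^2$. From \eqref{Hd} and \eqref{0-om} one computes
\[
H\bigl(\nabla(\varphi\circ d_H)\bigr)=|\varphi'(d_H)|,\qquad \Delta_H(\varphi\circ d_H)=\varphi''(d_H)+\varphi'(d_H)\,\divergenza\bigl(H_\xi(\nabla d_H)\bigr),
\]
so \eqref{eq:1} evaluated along such a barrier reduces to a one-dimensional ODE in $d_H$ plus a smooth lower-order correction. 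The natural reference barrier dictated by \eqref{fmoltodivcond} is $\varphi(d)=Cd^{\,1-\beta/q}$ when $\beta>q'$, because then $|\varphi'|^q$ is precisely of order $d^{-\beta}$ while $\varphi''$ is of the strictly smaller order $d^{-\beta/q-1}$; in the borderline case $\beta=q'$ a logarithmic barrier plays the same role.

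To prove that every solution $u\in W^{2,2}_{\textrm{loc}}(\Omega)$ bounded from below blows up at $\de\Omega$, I would combine local subsolution barriers of the type above with the gradient estimates of Section \ref{3}. The point is that, from the equation, $H(\nabla u)^q$ must track $f$ near $\de\Omega$, so $|\nabla u|$ is forced to be very large there, and integrating along the anisotropic normal direction produces $u\to+\infty$. Equivalently, in the tube $\{0<d_H<\delta\}$ the function $\underline w=\varphi(d_H)-M$, with $C$ chosen small enough for $\underline w$ to be a subsolution of \eqref{eq:1} and $M$ large enough to ensure $\underline w\le u$ on the inner face $\{d_H=\delta\}$, can be compared with $u$ via a comparison principle for the coercive operator $-\Delta_H\cdot+H(\nabla\cdot)^q+\lambda\cdot$ (available on $W^{2,2}_{\textrm{loc}}$ thanks to \eqref{strong} and $\lambda>0$), yielding $u\ge\varphi(d_H)-M$ and hence blow-up. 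Existence of a maximum solution follows by exhausting $\Omega$ from inside with smooth $\Omega_k\uparrow\Omega$, solving $-\Delta_H u_k+H(\nabla u_k)^q+\lambda u_k=f$ in $\Omega_k$ with $u_k=k$ on $\de\Omega_k$ by sub/supersolution methods (the constant $\inf f/\lambda$ is a subsolution), and passing to the limit: $(u_k)$ is increasing by comparison and the a priori gradient estimates of Section \ref{3}, together with the self-absorbing term $H(\nabla u)^q$, yield $L^\infty_{\textrm{loc}}$ and $W^{2,2}_{\textrm{loc}}$ bounds independent of $k$. The minimum solution among those bounded from below is obtained dually, as the monotone increasing limit of subsolutions generated by Picard-type iteration starting from $\inf f/\lambda$.

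Under the sharper hypothesis \eqref{boh2} with $\beta>q'$ I refine the construction: for each $\eps>0$ I take
\[
\underline w_\eps=(C_0-\eps)\,d_H^{\,1-\beta/q},\qquad \overline w_\eps=(C_0+\eps)\,d_H^{\,1-\beta/q},
\]
in a thin tube of $\de\Omega$. Setting $\alpha_0=\beta/q-1>0$, the only term of order $d_H^{-\beta}$ in the LHS of \eqref{eq:1} evaluated at these barriers is $(C_0\pm\eps)^q\alpha_0^q d_H^{-\beta}$, since both $\Delta_H(d_H^{-\alpha_0})$ and $\lambda d_H^{-\alpha_0}$ are $o(d_H^{-\beta})$ (the former because $\alpha_0+2<\beta$ is equivalent to $\beta>q'$). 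The balance $\alpha_0^q C_0^q=C_1$, which fixes $C_0$ as in the statement, then makes $\underline w_\eps$ a local subsolution and $\overline w_\eps$ a local supersolution for $\delta$ small enough, and comparison with any blow-up solution $u$, after absorbing the values on $\{d_H=\delta\}$ by additive constants, yields $\underline w_\eps-M\le u\le\overline w_\eps+M$ near $\de\Omega$; letting $\eps\to0$ produces the sharp asymptotic. Uniqueness among blow-up solutions bounded from below then follows because two such solutions $u_1,u_2$ share this leading behavior, so $u_1-u_2=o(d_H^{\,1-\beta/q})$ at $\de\Omega$; a weighted maximum principle for the equation linearized along $u_1$ and $u_2$, exploiting the coercive term $\lambda u$ and the strict convexity of $H^2$, forces $u_1\equiv u_2$.

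The main technical obstacle I expect is precisely this final linearization-and-comparison step: since solutions are only $W^{2,2}_{\textrm{loc}}(\Omega)$ and $\nabla H$ is discontinuous at $\xi=0$, classical $C^{1,\alpha}$ or $W^{2,r}$ comparison tools are not directly applicable, so the comparison has to be carried out on a regularization of $H$ away from the origin, with a subsequent limit passage relying on the local Lipschitz estimates produced in Section \ref{3}.
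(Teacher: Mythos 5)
The central assertion of the theorem --- that \emph{every} solution $u\in W^{2,2}_{\textrm{loc}}(\Omega)$ bounded from below blows up at $\de\Omega$ --- is exactly where your argument has a genuine gap. Your comparison of $u$ with the barrier $\underline w=\varphi(d_H)-M$ in the tube $\{0<d_H<\delta\}$ cannot be initialized: the boundary of that tube consists of the inner face $\{d_H=\delta\}$, where you can indeed force $\underline w\le u$ by choosing $M$ large, \emph{and} of $\de\Omega$ itself, where $\underline w\to+\infty$ while nothing is known about $u$ beyond a lower bound --- its behavior there is precisely the conclusion you are after, so the comparison is circular. The alternative heuristic that ``$H(\nabla u)^q$ must track $f$ near $\de\Omega$'' is not justified either: pointwise the equation only gives $H(\nabla u)^q=f+\Delta_H u-\lambda u$, and a priori $\Delta_H u$ could be large near $\de\Omega$ and absorb $f$, so no lower bound on the gradient follows. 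The paper proves the claim by a purely interior device: after normalizing $u\ge0$ and $f\ge\tilde K d_H^{-q'}$, for $x_0$ with $d_H(x_0)=2r$ it compares $u$ with the solution $\tilde u_r$ of the Dirichlet problem $-\Delta_H\tilde u_r+H(\nabla\tilde u_r)^q+\lambda\tilde u_r=Kr^{-q'}$ in the Wulff ball $\mathcal W_r(x_0)$ with \emph{zero} boundary data (there the ordering $u\ge0=\tilde u_r$ on $\de\mathcal W_r(x_0)$ is known), then rescales $u_r(x)=r^{\alpha}\tilde u_r(rx+x_0)$, $\alpha=(2-q)/(q-1)$, obtains uniform $L^\infty$ bounds by a Stampacchia argument, radial symmetry with respect to $H^o$, and a uniform positive lower bound for $u_r(0)$, whence $u(x_0)\ge r^{-\alpha}u_r(0)\to+\infty$ when $q<2$. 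Note also that for $q=2$ one has $\alpha=0$, the scaling gives no gain per step, and the paper needs an additional iteration (replace $u$ by $u-(K_0-\eps)$ and repeat, letting the lower bound grow by $K_0$ at each stage) to conclude divergence --- a case your proposal does not address.

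Two further points. First, your construction of the maximum solution (exhaustion $\Omega_k\uparrow\Omega$ with data $u_k=k$) does not obviously yield maximality; the paper's route (as in Theorem \ref{fpocodiv}) is to take the minimal blow-up solutions on the inner domains $\Omega_\delta$, which dominate any solution of \eqref{eq:1} because every such solution is finite on $\de\Omega_\delta$, and let $\delta\to0$. Second, your sharp asymptotics under \eqref{boh2}, via the barriers $(C_0\pm\eps)d_H^{\,1-\beta/q}$ and the balance $\alpha_0^{q}C_0^{q}=C_1$ with $\alpha_0=\beta/q-1$, is the right computation and consistent with the paper; but your uniqueness step by ``linearization plus weighted maximum principle'' is not carried out and, as you yourself observe, is delicate at the $W^{2,2}_{\textrm{loc}}$ level. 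The paper instead adapts the convexity trick already used for Theorem \ref{fpocodiv}: once the maximal and minimal blow-up solutions share the leading behavior, $w_\theta=\theta\overline u+(1-\theta)\inf f/\lambda$ is a subsolution lying below the minimal solution near $\de\Omega$, comparison extends this to $\Omega$, and letting $\theta\to1$ gives coincidence --- no linearization is needed.
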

Finally, we prove what happens when $\lambda \rightarrow 0^{+}$. We will denote with $u_{\lambda}$ a blow up solution of \eqref{eq:1}, and  $v_{\lambda}=u_{\lambda}-u_{\lambda}(x_{0})$, where $x_{0}$ is any fixed point chosed in $\Omega$.
\begin{theo}
\label{teoerg}
Let $1<q\le 2$, and suppose that $f\in W^{1,\infty}_{\emph{loc}}(\Omega)$ is bounded from below and such that, as $d_{H}(x)\rightarrow 0$,
\begin{equation}
	\label{condfmd}
f(x) = o\left(\frac{1}{d_{H}(x)^{q'}}\right).
\end{equation}
Denote with $u_{\lambda}$ the unique solution of \eqref{eq:1} in $W^{2,2}_{\rm{loc}}(\Omega)$ such that $u_{\lambda}$ blows up at $\de \Omega$. Then, $\nabla u_{\lambda}$ and $\lambda u_{\lambda}$ are bounded in $L^{\infty}_{\rm{loc}}(\Omega)$ and $\lambda u_{\lambda}\rightarrow u_{0}\in \R$, $v_{\lambda}\rightarrow v\in W^{2,2}_{\emph{loc}}(\Omega)$ where the convergence is uniformly on compact sets of $\Omega$. Moreover, $v$ verifies \eqref{roc} and it is a solution of the ergodic equation
\begin{equation}
\label{eqerg}
-\Delta_{H} v+ H(\nabla v)^{q}+u_{0}=f\quad\text{in }\Omega.
\end{equation}
In addition, if $\tilde u_{0}$ is such that the equation $-\Delta_{H} w+ H(\nabla w)^{q}+\tilde u_{0}=f$ admits a blow-up solution in $W_{\rm loc}^{2,2}(\Omega)$, then necessarily $\tilde u_{0}=u_{0}$.
\end{theo}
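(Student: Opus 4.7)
The plan is to follow the Lasry--Lions strategy adapted to the Finsler setting, exploiting the $\lambda$-independent local gradient estimates developed in Section \ref{3}. Because $f=o(d_H^{-q'})$ is exactly hypothesis \eqref{condf1} with $C_1=0$, Theorem \ref{fpocodiv} supplies for every $\lambda>0$ a unique blow-up solution $u_\lambda\in W^{2,2}_{\mathrm{loc}}(\Omega)$, obeying the asymptotic \eqref{roc} with a constant $C_0$ depending only on $q$ and therefore the same for all $\lambda$. The local gradient estimates of Section \ref{3} should yield $\|\nabla u_\lambda\|_{L^\infty(K)}\le C(K)$ on every compact $K\subset\Omega$, independently of $\lambda$. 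Comparison with the constant subsolution $(\inf f)/\lambda$ gives $\lambda u_\lambda\ge\inf f$. For the matching upper bound, I would fix $x_0\in\Omega$ and a Wulff neighborhood $K\Subset\Omega$; the gradient estimate forces $|u_\lambda(x)-u_\lambda(x_0)|\le M_0$ on $K$, and integrating the equation over $K$ controls the divergence term on $\de K$ and the nonlinear term $H(\nabla u_\lambda)^q$ by the pointwise gradient bound, yielding $\lambda u_\lambda(x_0)\,|K|\le C$. Hence both $\lambda u_\lambda$ and $v_\lambda:=u_\lambda-u_\lambda(x_0)$ are locally equibounded.

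Compactness then follows. The $v_\lambda$ are uniformly Lipschitz on compacts, so by Arzel\`a--Ascoli $v_{\lambda_n}\to v$ uniformly on compacts along a sequence $\lambda_n\to 0^+$; writing the equation as $-\Delta_H v_\lambda=f-H(\nabla v_\lambda)^q-\lambda u_\lambda$ with equibounded right-hand side, the $W^{2,2}_{\mathrm{loc}}$ regularity techniques used throughout the paper together with the uniform ellipticity \eqref{convex} give $v_{\lambda_n}\rightharpoonup v$ in $W^{2,2}_{\mathrm{loc}}(\Omega)$. Up to a further subsequence $\lambda_n u_{\lambda_n}(x_0)\to u_0\in\R$; since $\lambda_n v_{\lambda_n}\to 0$ uniformly on compacts, $\lambda_n u_{\lambda_n}\to u_0$ uniformly on compacts as well. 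Passing to the limit in
\[
-\Delta_H v_{\lambda_n}+H(\nabla v_{\lambda_n})^q+\lambda_n v_{\lambda_n}+\lambda_n u_{\lambda_n}(x_0)=f
\]
yields the ergodic equation \eqref{eqerg} for $v$. Because $v_\lambda$ differs from $u_\lambda$ by the constant $u_\lambda(x_0)$ and \eqref{roc} holds for $u_\lambda$ with the same leading constant $C_0$ for every $\lambda$, this shift is subleading near $\de\Omega$; passing to the limit transfers \eqref{roc} to $v$, and in particular $v$ blows up at $\de\Omega$.

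For the uniqueness of $u_0$, suppose $\tilde u_0$ admits a blow-up solution $w\in W^{2,2}_{\mathrm{loc}}(\Omega)$ of $-\Delta_H w+H(\nabla w)^q+\tilde u_0=f$. Rerunning the boundary asymptotic analysis behind Theorem \ref{fpocodiv}, in which the additive constant $\tilde u_0$ is subleading relative to $f$ near $\de\Omega$, shows $w$ satisfies \eqref{roc} with the same leading constant $C_0$, so $v-w$ is bounded on $\Omega$. Were $u_0>\tilde u_0$, the supremum of $v-w$ would be attained at some interior $\bar x\in\Omega$; at $\bar x$ one has $\nabla v=\nabla w$, so $H(\nabla v)^q-H(\nabla w)^q=0$, and the Hessian inequality $(v-w)_{ij}(\bar x)\le 0$ combined with the positive-definiteness \eqref{convex} of the coefficient matrix of $\Delta_H$ at $\nabla v(\bar x)$ yields $\Delta_H v(\bar x)\le\Delta_H w(\bar x)$. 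Subtracting the two equations at $\bar x$ then gives $0\le -\Delta_H v+\Delta_H w=\tilde u_0-u_0<0$, a contradiction; the reverse inequality is symmetric, so $u_0=\tilde u_0$. This also shows the limit $u_0$ is independent of the extracted subsequence, hence the full family $\lambda u_\lambda\to u_0$ as $\lambda\to 0^+$.

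The main obstacle I anticipate is the uniform upper bound on $\lambda u_\lambda$, which hinges entirely on the $\lambda$-independence of the gradient estimates of Section \ref{3}. A further delicate point is the maximum-principle argument in the uniqueness step: one has to ensure the interior maximum of $v-w$ does not occur at a critical point where $\nabla v=\nabla w=0$ (at which \eqref{convex} degenerates because $H_\xi$ is singular at the origin), and to justify the pointwise Hessian inequality rigorously using only the $W^{2,2}_{\mathrm{loc}}$ regularity of the strong solutions.
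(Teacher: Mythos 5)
Your overall skeleton (uniqueness of $u_\lambda$ from Theorem \ref{fpocodiv}, $\lambda$-independent local gradient bounds from Section \ref{3}, lower bound $\lambda u_\lambda\ge\inf f$ by comparison with the constant subsolution, Arzel\`a--Ascoli plus local $W^{2,2}$ compactness to pass to the limit in the equation) matches the paper, and your integration-over-$K$ device for the upper bound on $\lambda u_\lambda$ is a reasonable variant of the paper's explicit barrier bound \eqref{bound} (just note that Theorem \ref{gradbound} is stated for $f\in C^{1,\vartheta}_{\textrm{loc}}$ and you need Remark \ref{remgradbound} for $f\in W^{1,\infty}_{\textrm{loc}}$). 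The first genuine gap is the sentence ``passing to the limit transfers \eqref{roc} to $v$''. The convergence $v_\lambda\to v$ is only uniform on compact subsets of $\Omega$, whereas \eqref{roc} is a boundary asymptotic; for fixed $\lambda$ the barriers controlling $u_\lambda$ carry the constants $C_{\eps}/\lambda$, and the normalization $u_\lambda(x_0)\approx u_0/\lambda$ diverges as $\lambda\to 0^+$, so the asymptotics of $v_\lambda$ near $\de\Omega$ are not uniform in $\lambda$ and do not pass to the limit; with your argument it is not even clear that $v$ blows up at $\de\Omega$ at all. The paper closes this with two extra steps you are missing: a $\lambda$-uniform lower barrier $v_\lambda\ge -M+C_1 d_H^{-\alpha}$ with $C_1<C_0$, obtained by comparison in a tubular neighborhood using the uniform bound $v_\lambda\ge -M$ on $\Omega_{\delta_0}$; and then, for the limit, rewriting \eqref{eqerg} as $-\Delta_H v+H(\nabla v)^q+v=g$ with $g=f-u_0+v$, checking that $g$ still satisfies \eqref{condfmd}, and applying Theorem \ref{fpocodiv} (with zeroth-order coefficient $1$) to recover \eqref{roc}.

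The second gap is the uniqueness of the ergodic constant. From \eqref{roc} with the same leading constant you only get $v/w\to 1$ near $\de\Omega$, \emph{not} that $v-w$ is bounded (the difference may still diverge at a lower-order rate, and the case $q=2$ is logarithmic), so the interior-maximum setup is unfounded; even if $v-w$ were bounded its supremum need not be attained in $\Omega$; and the pointwise Hessian inequality at a maximum point is not justified for $W^{2,2}_{\textrm{loc}}$ strong solutions, with the additional degeneracy of the coefficients when $\nabla v(\bar x)=0$ --- you flag these last two issues but do not resolve them. The paper's argument avoids all of this: assuming $u_0<\tilde u_0$, the $1$-homogeneity of $H$ gives that $\theta\tilde v$, $\theta<1$ close to $1$, is a subsolution of the $\eps$-perturbed equation \eqref{parpa} satisfied by $v$, the key inequality being $\theta\tilde v\le v+C_\theta$, which only uses the common blow-up rate $d_H^{-\alpha}$ and not boundedness of the difference; then the comparison-with-subsolutions part of Theorem \ref{fpocodiv} yields $\theta\tilde v\le v$, letting $\theta\to 1$ gives $\tilde v\le v$, and the contradiction comes from the fact that $\tilde v+c_1$ solves the same ergodic problem for every $c_1\in\R$. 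You would need to replace your maximum-principle step by an argument of this type (or some other rigorous comparison argument valid for $W^{2,2}_{\textrm{loc}}$ solutions) to obtain the last assertion of the theorem, which then also yields, as in the paper, that the limit $u_0$ does not depend on the extracted subsequence.
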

We will refer to the unique constant $u_{0}$ such that \eqref{eqerg} admits a blow-up solution as the ergodic constant relative to \eqref{eqerg}.
\begin{rem}
We observe that the ergodic constant $u_{0}$, in the case $q=2$, is related to an eigenvalue problem. Indeed, if $v$ is a solution of the ergodic problem, performing the change of variable $w=e^{-v}$ and using the properties of $H$ we have that $w$ satisfies
\begin{equation}
\label{eigen-2}
\left\{
\begin{array}{ll}
	-\Delta_{H} w + f(x)\, w=u_{0}\,w &\text{in }\Omega,\\
	w=0&\text{on }\de\Omega,\\
	w>0&\text{in }\Omega.
\end{array}
\right.
\end{equation}
This observation will be useful in the proof of the uniqueness, up to an additive constant, of the blow-up solutions of \eqref{eqerg} (Theorem \ref{q=2theo} below). As a matter of fact, $u_{0}$ is the smallest eigenvalue of \eqref{eigen-2}. We refer to the proof of Theorem \ref{q=2theo} for the details. 

When $q\in]1,2[$, due to the nonlinearity of the principal part of the operator, and the fact that problem \eqref{eqerg} is non-variational, the uniqueness up to an additive constant of the solution of \eqref{eqerg} does not seem to be easy to prove.
\end{rem}
\begin{theo}
\label{q=2theo}
If $q=2$, under the hypotheses of Theorem \ref{teoerg}, and assuming also that $f\in W^{1,\infty}_{\emph{loc}}(\Omega)$ satisfies
\begin{equation}
\label{eq42}
\qquad |\nabla f(x)|\le \frac{C_{1}}{d_{H}^{3}(x)}
\end{equation}
for some $C_{1}\ge 0$,  if $v$ and $\tilde v$ are blow-up solutions in $W^{2,2}_{\emph{loc}}(\Omega)$ of \eqref{eqerg}, then $\tilde v =v+C$, for some constant $C\in\R$.  
\end{theo}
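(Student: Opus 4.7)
The strategy follows the hint in the Remark preceding the statement: the substitution $w=e^{-v}$ linearizes \eqref{eqerg} when $q=2$, and uniqueness up to an additive constant reduces to simplicity of the principal eigenvalue of a Finsler--Laplacian Dirichlet problem. \textbf{Step 1 (linearization).} Let $v\in W^{2,2}_{\emph{loc}}(\Omega)$ be a blow-up solution of \eqref{eqerg} and set $w=e^{-v}$. Writing $G(\xi)=\tfrac12 H(\xi)^2$, the $1$-homogeneity of each $G_{\xi_i}$ yields $G_{\xi_i}(-\nabla w/w)=-G_{\xi_i}(\nabla w)/w$, while Euler's identity gives $\sum_i G_{\xi_i}(\xi)\,\xi_i=H(\xi)^2$. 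A direct computation produces
$$
\Delta_H v=-\frac{\Delta_H w}{w}+\frac{H(\nabla w)^2}{w^2},\qquad H(\nabla v)^2=\frac{H(\nabla w)^2}{w^2},
$$
so the quadratic gradient term of \eqref{eqerg} cancels exactly and $w$ solves \eqref{eigen-2} in the $W^{2,2}_{\emph{loc}}$ sense, with $w>0$ in $\Omega$ and $w\to 0$ at $\partial\Omega$; in the $q=2$ case the asymptotics \eqref{roc} give the boundary decay $w\sim d_H^{\,C_0}$.

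\textbf{Step 2 (identification of $u_0$ as the principal eigenvalue).} Introduce the Rayleigh quotient
$$
\mu_1=\inf_{\varphi\in H^1_0(\Omega)\setminus\{0\}}\frac{\int_\Omega H(\nabla\varphi)^2\,dx+\int_\Omega f\,\varphi^2\,dx}{\int_\Omega \varphi^2\,dx}.
$$
Combining the boundary decay of Step 1 with the hypothesis \eqref{condfmd} and the local gradient bounds supplied by Theorem \ref{teoerg}, one verifies that $w\in H^1_0(\Omega)$ and that integration by parts in the variational formulation is permitted. Since $H(\nabla|\varphi|)=H(\nabla\varphi)$ a.e., a minimizer can be chosen nonnegative, and Harnack for $\Delta_H$ makes it strictly positive; two strictly positive eigenfunctions must correspond to the same eigenvalue, so $u_0=\mu_1$.

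\textbf{Step 3 (simplicity via anisotropic Picone).} Let $\tilde v$ be a second blow-up solution and $\tilde w=e^{-\tilde v}$: it is another strictly positive solution of \eqref{eigen-2} with eigenvalue $u_0$. The anisotropic Picone inequality for $-\Delta_H$ (see e.g.\ \cite{bfk}), valid under \eqref{strong}, reads
$$
H(\nabla w)^2-H(\nabla\tilde w)\,H_\xi(\nabla\tilde w)\cdot\nabla\!\left(\frac{w^2}{\tilde w}\right)\ge 0
$$
pointwise, with equality if and only if $w/\tilde w$ is constant. Testing the equation for $\tilde w$ against $w^2/\tilde w$ and the equation for $w$ against $w$, subtracting, and integrating the Picone inequality over $\{d_H>\varepsilon\}$ produces $0\ge 0$ with equality after sending $\varepsilon\to 0^+$. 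Hence $w=C\tilde w$ for some $C>0$, i.e.\ $\tilde v=v+\log(1/C)$.

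The main difficulty is executing Steps 2--3 up to the boundary: one must prove $w\in H^1_0(\Omega)$ and justify that the test functions $w$ and $w^2/\tilde w$ produce no spurious boundary terms. This is exactly where the additional hypothesis \eqref{eq42} enters, refining the estimates on $\nabla v$ near $\partial\Omega$ so that $\nabla w/w$ is controlled and the limit $\varepsilon\to 0^+$ in the Picone computation is legitimate.
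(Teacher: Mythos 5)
Your proposal is correct and follows essentially the same route as the paper: the substitution $w=e^{-v}$, the boundary and gradient control coming from \eqref{roc} and Theorem \ref{gradbound2} (which is exactly where \eqref{eq42} enters), and the reduction to simplicity of the principal eigenvalue of \eqref{eigen-2}. The only difference is presentational: where you spell out the Picone-type simplicity argument, the paper delegates that step to adaptations of \cite{pota} and \cite{klp}.
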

\section{Gradient bounds}
\label{3}
In this section we prove a local gradient bound for the solutions of
\begin{equation}
\label{eq:gb}
-\Delta_{H}u +H(\nabla u)^{q}+\lambda u=f,\quad u\in W_{\textrm{loc}}^{2,2}(\Omega).
\end{equation}
Such estimates are crucial in order to prove Theorem \ref{teoerg} on the ergodic problem. The method we will use relies in a local version, contained in \cite{ll89} (see also\cite{li80,li85}), of the classical Bernstein technique (see \cite{gt,ladyz}).

\begin{theo}
\label{gradbound}
Let $\Omega$ be a bounded open set, and suppose that $u\in W^{2,2}_{\emph{loc}}(\Omega)$ solves \eqref{eq:gb}. 
For any $\delta>0$, let us consider the set $\Omega_{\delta}=\{x\in\Omega\colon d_{H}(x)>\delta \}$. If $f\in C_{\emph{loc}}^{1,\vartheta}(\Omega)$, for some $\vartheta \in]0,1[$, then
\begin{equation}
\label{bellissima}
|\nabla u| \le C_{\delta}\quad\text{for any }x\in\Omega_{\delta},
\end{equation}
where the constant $C_{\delta}$ depends on $\|\nabla f\|_{\infty}$, $\sup(f-\lambda u)$, $\delta$ and $q$.
\end{theo}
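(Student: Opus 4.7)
The plan is to apply the Bernstein method to $w=H(\nabla u)^{2}$, localized by a cutoff depending on the anisotropic distance. I first rewrite \eqref{eq:gb} in non-divergence form: with $a_{ij}(\xi):=\tfrac12(H^{2})_{\xi_{i}\xi_{j}}(\xi)$, the equation becomes $-a_{ij}(\nabla u)u_{ij}+H(\nabla u)^{q}+\lambda u=f$; by \eqref{convex}, $(a_{ij})$ is uniformly elliptic with constant $\gamma$ and $0$-homogeneous in $\xi$. Differentiating in $x_{k}$, multiplying by $(H^{2})_{\xi_{k}}(\nabla u)$ and summing in $k$, and using Euler's identity $(H^{2})_{\xi_{k}}u_{k}=2w$, one derives for $w$ a linear second-order PDE of the schematic form
\[
-a_{ij}(\nabla u)\,w_{ij}+qH(\nabla u)^{q-1}H_{\xi_{l}}(\nabla u)\,w_{l}+2\lambda w=-M+\Lambda+(H^{2})_{\xi_{k}}f_{k},
\]
where $M:=2a_{ij}(\nabla u)a_{lm}(\nabla u)u_{mi}u_{lj}\ge 0$ is the good quadratic-in-$D^{2}u$ term and $\Lambda:=(H^{2})_{\xi_{k}}(\nabla u)\,a_{ij,\xi_{r}}(\nabla u)\,u_{rk}u_{ij}$ is an error of the same order.

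To localize, Theorem \ref{cmteo} yields $d_{H}\in C^{2}$ in a neighborhood of $\partial\Omega$, so I choose a cutoff $\zeta=\eta(d_{H})\in C^{2}(\overline{\Omega})$ with $\eta\equiv 0$ on $\{d_{H}\le \delta/4\}$, $\eta\equiv 1$ on $\{d_{H}\ge \delta/2\}$, and $|\nabla\zeta|$, $|D^{2}\zeta|$ bounded by a constant depending only on $\delta$. With a large parameter $\kappa>0$ to be fixed later, set $\Phi=\zeta^{\kappa}w$. If $\Phi\not\equiv 0$, it attains its maximum at an interior point $x_{0}$; there $\nabla\Phi=0$ yields $w_{l}=-\kappa w\,\zeta_{l}/\zeta$, and $-a_{ij}(\nabla u)\Phi_{ij}\ge 0$ combined with the PDE for $w$ produces, after substitution, a pointwise inequality that bounds $\zeta^{\kappa}M$ from above by $\zeta^{\kappa}\Lambda$, the forcing $\zeta^{\kappa}(H^{2})_{\xi_{k}}f_{k}$, and cutoff terms of the form $C_{\delta}\bigl(\zeta^{\kappa-2}w+\zeta^{\kappa-1}H(\nabla u)^{q-1}w|\nabla\zeta|\bigr)$.

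The decisive step is to bound $M$ from below. A trace Cauchy--Schwarz, using $(a_{ij})\ge \gamma I$, gives $(a_{ij}u_{ij})^{2}\le (n/2)M$; since $a_{ij}u_{ij}=H^{q}+\lambda u-f$ from the equation, either the trivial bound $w^{q/2}\le 2\sup|f-\lambda u|$ holds at $x_{0}$ (and we are done) or $M\ge c_{1}w^{q}$ at $x_{0}$. The error $\Lambda$ is absorbed into $\tfrac12 M$ by a weighted Cauchy--Schwarz, exploiting the bounded prefactor $(H^{2})_{\xi_{k}}a_{ij,\xi_{r}}=O(1)$ on $\R^{n}\setminus\{0\}$ and the tangentiality identity $a_{ij,\xi_{r}}\xi_{r}=0$; furthermore $|(H^{2})_{\xi_{k}}f_{k}|\le Cw^{1/2}\|\nabla f\|_{\infty}$. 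Re-expressing in terms of $\Phi(x_{0})$ and choosing $\kappa$ large enough (depending only on $q$) that the resulting powers of $\zeta$ stay $\le 1$, one obtains
\[
c_{1}\Phi(x_{0})^{q}\le C_{\delta,\kappa}\bigl(\Phi(x_{0})^{(q+1)/2}+\Phi(x_{0})+\|\nabla f\|_{\infty}^{2}+\sup(f-\lambda u)^{2}+1\bigr).
\]
Since $q>1$ implies $(q+1)/2<q$, this forces $\Phi(x_{0})\le C_{\delta}$; hence $\zeta^{\kappa}w\le C_{\delta}$ on all of $\Omega$, which yields \eqref{bellissima} on $\Omega_{\delta}=\{\zeta\equiv 1\}$.

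The principal obstacle I anticipate is the absorption of $\Lambda$ into a fraction of $M$: the naive bound $|\Lambda|\le C|D^{2}u|^{2}\le (C/\gamma^{2})M$ leaves a constant possibly exceeding $1$, so one must exploit the tangential structure $a_{ij,\xi_{r}}\xi_{r}=0$ together with a weighted Cauchy--Schwarz in the inner product induced by $(a_{ij})$ to produce an absorption with a sufficiently small constant. A secondary difficulty is regularity: the computation formally requires $u\in W^{3,2}_{\mathrm{loc}}$, which is obtained on the open set $\{\nabla u\ne 0\}$ by interior Calder\'on-Zygmund applied to the linearization with smooth coefficients $a_{ij}(\nabla u)$ and the hypothesis $f\in C^{1,\vartheta}_{\mathrm{loc}}$; on the complement $\nabla u$ vanishes and the bound is automatic, so the pointwise estimate extends to $\Omega_{\delta}$.
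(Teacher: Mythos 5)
Your skeleton is the same Bernstein-type argument the paper uses: differentiate the non-divergence form, test with a gradient factor, localize, apply the maximum principle at an interior maximum of the localized quantity, bound the good quadratic term from below through the trace inequality and the equation by $(H(\nabla u)^{q}+\lambda u-f)^{2}$, and close via the exponent gap $(q+1)/2<q$; working with $w=H(\nabla u)^{2}$ and a cutoff $\zeta^{\kappa}$ instead of $|\nabla u|^{2}$ and a cutoff satisfying \eqref{cond_phi} is only a cosmetic variant, and your regularity remarks match the paper's use of $u\in C^{3}(\{\nabla u\ne 0\})\cap C^{1,\gamma}$. The genuine problem is the step you yourself flag as decisive: the absorption of $\Lambda=(H^{2})_{\xi_{k}}a_{ij,\xi_{r}}u_{rk}u_{ij}$ into $\tfrac12 M$. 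Treated as you describe it, $\Lambda$ is purely quadratic in $D^{2}u$ with a bounded ($0$-homogeneous) coefficient tensor, so the absorption would have to be a pointwise matrix inequality whose constant is governed by the size of $\nabla_{\xi}a$ on the unit sphere relative to the ellipticity constant $\gamma$ in \eqref{convex}; the tangentiality identity $a_{ij,\xi_{r}}\xi_{r}=0$ only annihilates the contraction of the $r$-index with the direction $\nabla u$ and gives no smallness for Hessians acting transversally to $\nabla u$. Since for admissible $H$ the ratio of $|\nabla_{\xi}a|$ to $\gamma$ can be arbitrarily large (perturb a round norm by a small, highly oscillatory homogeneous term), no weighted Cauchy--Schwarz can produce an absorption constant below $1$ in general: as written, this key step is unjustified and in fact false as a pointwise inequality.

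The repair is already contained in your own computation, and it is exactly the paper's mechanism. Because $w_{r}=(H^{2})_{\xi_{k}}(\nabla u)\,u_{kr}$, your error term is a drift term, $\Lambda=a_{ij,\xi_{r}}(\nabla u)\,u_{ij}\,w_{r}$, not a genuinely quadratic one. At the interior maximum $x_{0}$ of $\Phi=\zeta^{\kappa}w$ (where necessarily $\nabla u(x_{0})\ne 0$) you have $w_{r}=-\kappa\, w\,\zeta_{r}/\zeta$, and the $(-1)$-homogeneity of $a_{ij,\xi_{r}}$, i.e. $|\nabla u|\,|a_{ij,\xi_{r}}(\nabla u)|\le C$, yields $\zeta^{\kappa}|\Lambda|\le C\kappa\,\zeta^{\kappa-1}|\nabla\zeta|\,w^{1/2}|D^{2}u|$; Young's inequality then absorbs this into $\eps\,\zeta^{\kappa}M$ (using $M\ge 2\gamma^{2}|D^{2}u|^{2}$) plus a cutoff term $C(\eps)\kappa^{2}\zeta^{\kappa-2}|\nabla\zeta|^{2}w$ of the type you already allow, with a free small parameter and no structural cancellation. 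This is precisely how the paper handles its term $a^{ij}_{\xi_{m}}u_{x_{i}x_{j}}\varphi_{x_{m}}v$, via the degree $-1$ homogeneity of $a^{ij}_{\xi_{m}}$ and Young's inequality. One further small correction: in your dichotomy at $x_{0}$ the relevant quantity is $\sup(f-\lambda u)$, not $\sup|f-\lambda u|$ --- only an upper bound on $f-\lambda u$ is available (and needed), and for the blow-up solutions to which the theorem is applied $\sup|f-\lambda u|$ is infinite, whereas the stated constant $C_{\delta}$ must depend on $\sup(f-\lambda u)$ only.
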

Actually, we will prove in Section 4 that the estimate \eqref{bellissima} holds also under different assumptions on $f$ (see Remark \ref{remgradbound}).
\begin{proof}
The regularity assumptions on $H$ imply that $u\in C^{3}(\{\nabla u\ne 0\})\cap C^{1,\gamma}(\Omega)$ (see \cite{tk84,cfv,ciasal,ladyz}). 

For the sake of simplicity, we put 
 \[
 a^{ij}(\xi)=\frac 1 2 \left\{[H(\xi)]^{2} \right\}_{\xi_{i}\xi_{j}}.
 \]
  Hence the equation \eqref{eq:gb} can be written as (here and in the following the Einstein summation convention is understood)
\[
-a^{ij}(\nabla u)\,u_{x_{i} x_{j}} +[H(\nabla u)]^{q}+\lambda u = f.
\]
If $\nabla u \ne 0$, we can derive the equation with respect to $x_{k}$, obtaining that
\[
-a^{ij}u_{x_{i}x_{j}x_{k}}-a^{ij}_{\xi_{m}} \, u_{x_{m} x_{k}}\, u_{x_{i}x_{j}} + q\, H^{q-1} H_{\xi_{m}} u_{x_{m}x_{k}} +\lambda u_{x_{k}} =f_{x_{k}}.
\]
Let us consider $\varphi \in \mathcal{D}\left( \Omega \right) $ such that $0\leq \varphi \leq 1$ in $\Omega $, $\varphi \equiv 1$ on $\Omega _{\delta }$
and
\begin{equation}
\left\vert \Delta \varphi \right\vert \le C\,\varphi ^{\theta },\quad
\left\vert \nabla \varphi \right\vert ^{2}\le C\,\varphi ^{1+\theta }\quad \text{in }\Omega , \label{cond_phi}
\end{equation}
for some $\theta \in \left] 0,1\right[ $ that will be determined later and
some constant $C=C\left( \delta ,\theta \right)$. Multiplying by $\varphi\,u_{x_{k}}$ and summing we get
\begin{multline}
-a^{ij}u_{x_{i}x_{j}x_{k}}\varphi\,u_{x_{k}}
-a^{ij}_{\xi_{m}} \, u_{x_{m} x_{k}}\, u_{x_{i}x_{j}} \varphi\, u_{x_{k}} + q\, H^{q-1} H_{\xi_{m}} u_{x_{m}x_{k}} \varphi u_{x_{k}}+\lambda \varphi\, u_{x_{k}}u_{x_{k}} =\\=\varphi\, f_{x_{k}} u_{x_{k}}.\label{equality}
\end{multline}
Denoting $v=|\nabla u|^{2}$, equation \eqref{equality} can be rewritten as
\[
-a^{ij}\,v_{x_{i}x_{j}}\,\varphi
+2\varphi a^{ij}u_{x_{i}x_{k}}u_{x_{j}x_{k}}
-  a^{ij}_{\xi_{m}} \, u_{x_{i}x_{j}} \varphi\,v_{x_{m}}
+ q\, H^{q-1} H_{\xi}\cdot \nabla v \,\varphi+2\lambda \varphi\, v = 2\varphi\, \nabla f \cdot \nabla u,
\]
or
\begin{multline*}
-a^{ij}\,(\varphi\, v)_{x_{i}x_{j}}
+2\varphi a^{ij}u_{x_{i}x_{k}}u_{x_{j}x_{k}}
-  a^{ij}_{\xi_{m}} \, u_{x_{i}x_{j}} (\varphi\,v)_{x_{m}}
\\
  + q\, H^{q-1} H_{\xi}\cdot \nabla (v \,\varphi)+2\lambda \varphi\, v
 + \frac{2}{\varphi}\left(a^{ij}\varphi_{x_{i}} (\varphi v)_{x_{j}}\right)
 \\= 2\varphi\, \nabla f \cdot \nabla u + \left[ - a^{ij}_{\xi_{m}} \, u_{x_{i}x_{j}}\,\varphi_{x_{m}}+qH^{q-1}H_{\xi}\cdot \nabla \varphi \right]v -  a^{ij} \varphi_{x_{i} x_{j}} v + 2\frac{v}{\varphi} (a^{ij}\varphi_{x_{i}}\varphi_{x_{j}}).
\end{multline*}
Let $x_{0}$ be a maximum point for $\varphi v$ in $\Omega$. Obviously, $\nabla u(x_{0})\ne 0$, otherwise $\varphi v\equiv 0$ in $\Omega$. For the same reason, we can assume that $x_{0} \in {\textrm Supp}\,\varphi$. Then by the maximum principle we get the following inequality in $x_{0}$:
\begin{multline}
\label{eqmax}
2\varphi a^{ij}u_{x_{i}x_{k}}u_{x_{j}x_{k}}
 +2\lambda \varphi\, v
 \le 2\varphi\, \nabla f \cdot \nabla u +\left[- a^{ij}_{\xi_{m}} \, u_{x_{i}x_{j}}\,\varphi_{x_{m}}+qH^{q-1}H_{\xi}\cdot \nabla \varphi \right]v +\\-  a^{ij} \varphi_{x_{i} x_{j}} v + 2\frac{v}{\varphi} (a^{ij}\varphi_{x_{i}}\varphi_{x_{j}}).
\end{multline}

Now, being $H(\xi)$ $1$-homogeneous, and recalling that $a^{ij}=H\,H_{\xi_{i}\xi_{j}}+H_{\xi_{i}}H_{\xi_{j}}$, it follows that $a^{ij}_{\xi_{m}}$ are homogeneous of degree $-1$, and then 
\[
|\nabla u|\left|a^{ij}_{\xi_{m}}(\nabla u)\right|= \left|a^{ij}_{\xi_{m}}\left(\frac{\nabla u}{|\nabla u|}\right)\right|\le {C}.
\]
Hence, using the above inequality, the boundedness of $a^{ij}$, and the Young inequality, we get
\begin{multline*}
\left|a^{ij}_{\xi_{m}}(\nabla u)u_{x_{i}x_{j}}\varphi_{x_{m}}v\right| = |\nabla u| \left|a^{ij}_{\xi_{m}}\left(\frac{\nabla u}{|\nabla u|}\right)u_{x_{i}x_{j}}\varphi_{x_{m}}\right| \le
C|\nabla u||\nabla \varphi| \left(a^{ij}(\nabla u)\,u_{x_{i}x_{j}} \right)\le \\ \le
\eps \varphi \left(a^{ij}(\nabla u)\,u_{x_{i}x_{j}} \right)^{2}
 +C(\eps) \frac{|\nabla \varphi|^{2}}{\varphi} |\nabla u|^{2} = 
 \eps \,\varphi \left(H(\nabla u)^{q}+\lambda u -f\right)^{2}
 +C(\eps) \frac{|\nabla \varphi|^{2}}{\varphi} |\nabla u|^{2}.
\end{multline*}
On the other hand,
\[
a^{ij}(\nabla u)\,u_{x_{i}x_{k}}u_{x_{j}x_{k}} \ge \frac{(a^{ij}(\nabla u)\,u_{x_{i}x_{j}})^{2}}{{\textrm{trace}}[a^{ij}]} \ge C \left(H(\nabla u)^{q}+\lambda u -f\right)^{2}.
\]
Hence, for $\eps$ sufficiently small, recalling \eqref{eqmax} and that $\lambda u-f$ is bounded from below, we have
\begin{multline*}
\lbrack (H(\nabla u)^{q}-C_{1})^{+}]^{2}\varphi \le  \\
\le C\left\{ \frac{|\nabla \varphi |^{2}}{\varphi }|\nabla u|^{2}+2\varphi
\,\left\vert \nabla f\right\vert \left\vert \nabla u\right\vert +q\left\vert H\left( \nabla u\right) \right\vert ^{q-1}\left\vert H_{\xi }
\left( \nabla u\right) \right\vert \left\vert \nabla \varphi \right\vert v-a^{ij}\varphi_{x_{i}x_{j}}v+2\frac{v}{\varphi }(a^{ij}\varphi _{x_{i}}
\varphi_{x_{j}}).\right\}
\end{multline*}
Now using conditions \eqref{cond_phi}, \eqref{crescita}, the boundedness of $a^{ij}$ and the  $0$-homogeneity of $H_{\xi }$, we get
\begin{equation*}
\lbrack (H(\nabla u)^{q}-C_{1})^{+}]^{2}\varphi \le C\left(\varphi v^{\frac{1}{2}}+\varphi ^{\theta }v^{\frac{q+1}{2}}+\varphi^{\theta }v\right)
\end{equation*}
that means
\begin{equation*}
\varphi v^{q}\le C\left(1+\varphi v^{\frac{1}{2}}+\varphi ^{\theta }v^{\frac{q+1}{2}}+\varphi ^{\theta }v\right).
\end{equation*}
Easy computation show that if $\theta \ge \frac{3-p}{2}$, then
\begin{equation*}
\max_{\Omega }\varphi v=\varphi v\left( x_{0}\right) \le C.
\end{equation*}
Being $\varphi\equiv 1$ in $\Omega_{\delta}$, we get that
\[
|\nabla u|= v^{1/2}\le C_{\delta}\text{ in }\Omega_{\delta},
\]
and the proof is complete.
\end{proof}
Actually, we can prove a more precise estimate of the gradient of the solutions when we precise the behavior of the datum $f$ near the boundary.
\begin{theo}
\label{gradbound2}
Let $\Omega$ be a bounded open set, and suppose that $u\in W^{2,2}_{\emph{loc}}(\Omega)$ solves \eqref{eq:gb}. Supposing that $f\in W^{1,\infty}_{\emph{loc}}(\Omega)$ satisfies
\begin{equation}
\label{eq41}
|f(x)|\le \frac{C_{1}}{d_{H}^{\beta}(x)},\qquad
|\nabla f(x)|\le \frac{C_{1}}{d_{H}^{\beta+1}(x)}
\end{equation}
for some $\beta\le q'$, $C_{1}\ge 0$, 
and 
\[
\lambda u\ge -C_{2}
\]
for some $C_{2}\ge 0$. Then
\[
|\nabla u|\le \frac{C_{3}}{d_{H}^{\frac{1}{q-1}}(x)}\quad\text{in }\Omega,
\]
where $C_{3}$ only depends on $C_{1},C_{2},\beta$ and the diameter of $\Omega$. 
\end{theo}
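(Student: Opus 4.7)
The gradient blow-up rate $d_{H}^{-1/(q-1)}$ is the one dictated by the natural scaling of the equation: under the transformation $u\mapsto r^{(q-2)/(q-1)}u(x_{0}+r\,\cdot)$, both $-\Delta_{H}u$ and $H(\nabla u)^{q}$ transform identically, and the weighted quantity $d_{H}^{1/(q-1)}|\nabla u|$ is scale invariant. The plan is therefore to reduce the weighted pointwise estimate to the \emph{interior} estimate of Theorem \ref{gradbound}, applied at unit scale to a suitably rescaled problem around each point.

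Fix $x_{0}\in\Omega$ and set $r:=d_{H}(x_{0})$. Since $H^{o}$ is a norm, the triangle inequality applied to the definition of $d_{H}$ yields $\mathcal{W}_{r/2}(x_{0})\subset\Omega$ and, more precisely, $d_{H}(x_{0}+ry)\ge r/2$ for every $y\in\mathcal{W}_{1/2}$. I would then introduce
\[
\tilde u(y):=r^{\frac{q-2}{q-1}}\,u(x_{0}+ry),\qquad y\in\mathcal{W}_{1/2},
\]
which belongs to $W^{2,2}_{\mathrm{loc}}(\mathcal{W}_{1/2})$, and verify by a direct change of variables that
\[
-\Delta_{H}\tilde u + H(\nabla\tilde u)^{q} + \tilde\lambda\,\tilde u = \tilde f\quad\text{in }\mathcal{W}_{1/2},
\]
with $\tilde\lambda:=r^{2}\lambda$ and $\tilde f(y):=r^{q'}f(x_{0}+ry)$.

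Using the lower bound $d_{H}(x_{0}+ry)\ge r/2$, the hypotheses \eqref{eq41} and $\lambda u\ge -C_{2}$, together with $r\le\mathrm{diam}(\Omega)$ and the crucial fact $\beta\le q'$, one obtains
\[
\|\tilde f\|_{L^{\infty}(\mathcal{W}_{1/2})}+\|\nabla\tilde f\|_{L^{\infty}(\mathcal{W}_{1/2})}\le C\,r^{q'-\beta}\le C,\qquad \tilde\lambda\,\tilde u = r^{q'}(\lambda u)\ge -C,
\]
with $C$ depending only on $C_{1},C_{2},\beta$ and $\mathrm{diam}(\Omega)$. I would then apply Theorem \ref{gradbound} (in the extension to $f\in W^{1,\infty}_{\mathrm{loc}}$ announced in Remark \ref{remgradbound}) to $\tilde u$ on $\mathcal{W}_{1/2}$, say with $\delta=1/4$, obtaining $|\nabla\tilde u(0)|\le C$ with $C$ independent of $x_{0}$. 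Undoing the scaling,
\[
|\nabla u(x_{0})|=r^{-1/(q-1)}|\nabla\tilde u(0)|\le C\,d_{H}(x_{0})^{-1/(q-1)},
\]
which is the desired estimate.

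The main obstacle is the uniform-in-$x_{0}$ verification that the rescaled problem falls under the hypotheses of Theorem \ref{gradbound}: one must track precisely how every quantity transforms under the rescaling and exploit the borderline assumption $\beta\le q'$ to keep the factor $r^{q'-\beta}$ bounded as $r\to 0^{+}$. Note that if $\beta>q'$ (the regime of Theorem \ref{fmoltodiv}) the rescaled right-hand side blows up and the present scheme breaks down; this sharpness explains why the result is stated exactly in this range of $\beta$. A secondary technical point is the regularity mismatch between Theorem \ref{gradbound} as stated (which asks for $f\in C^{1,\vartheta}_{\mathrm{loc}}$) and the present hypothesis $f\in W^{1,\infty}_{\mathrm{loc}}$, a gap that is bridged precisely by the announced extension in Remark \ref{remgradbound}.
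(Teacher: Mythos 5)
Your proposal is essentially the paper's own proof: the same rescaling $v(x)=r^{\alpha}u(x_{0}+rx)$ with $\alpha=\frac{2-q}{q-1}$ and $r$ comparable to $d_{H}(x_{0})$, the same use of $\beta\le q'$ to bound $r^{q'}f(x_{0}+r\,\cdot)$ and its gradient by a constant depending only on $C_{1}$, $\beta$ and $\mathrm{diam}_{H}(\Omega)$, followed by the interior bound of Theorem \ref{gradbound} (in the form of Remark \ref{remgradbound}) at unit scale and undoing the scaling. Only note the sign slip in your exponent: the factor must be $r^{\frac{2-q}{q-1}}$, not $r^{\frac{q-2}{q-1}}$, as your own invariance computation and your final step $|\nabla u(x_{0})|=r^{-1/(q-1)}|\nabla\tilde u(0)|$ implicitly require.
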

\begin{proof}
Let $x_{0}\in \Omega$, define $r=\frac{1}{2}d_{H}(x_{0})$ and consider $v(x)=r^{\alpha}u(x_{0}+rx)$, $\alpha=\frac{2-q}{q-1}$, for $x\in \mathcal W_{1}(0)= \mathcal W$. The function $v\in W^{2,2}_{\textrm loc}(\mathcal W)$ solves 
\[
-\Delta_{H}v+H(\nabla v)^{q}+\lambda r^{2}v=r^{q'}f(x_{0}+rx) \quad\text{in }\mathcal W.
\]
The hypothesis \eqref{eq41} on $f$ gives that
\[
|r^{q'}f(x_{0}+rx)|\le C_{1}2^{\beta}r^{q'-\beta}\le C_{1}2^{\beta}[{\textrm diam}_{H}(\Omega)]^{q'-\beta}=C_{4}
\]
where ${\textrm diam}_{H}(\Omega)=\displaystyle\sup_{x,y\in\Omega} H^{o}(x-y)$, and, similarly,
\[
|r^{q'}\nabla_{x} f(x_{0}+rx)|\le C_{1}2^{\beta}r^{q'-\beta}\le C_{4}.
\]
 Now, using the estimate \eqref{bellissima}, we have
\[
|\nabla v(0)|=|\nabla u(x_{0})|r^{\frac{1}{q-1}}\le C_{3},
\]
where $C_{3}$ depends on $C_{4}$.
\end{proof}

\section{Proof of the main results}
\label{4}
\begin{proof}[Proof of Theorem \ref{fpocodiv}]
We split the proof considering first the case of $f$ bounded,
then we consider the general case, with $f\in L^{\infty}_{\textrm{loc}}(\Omega)$ such that \eqref{condf1} holds.

{\bf Case 1: $f\in L^{\infty}(\Omega)$}.
We look for solutions which blow up approaching to the boundary. To
this aim, we consider functions of the type $u(x)=C_0 d_H(x)^{-\alpha}$, with $C_0>0$ and $\alpha>0$. Recall that the anisotropic distance function is $C^2(\Gamma)$, where $\Gamma=\{x\in \bar\Omega\colon d_{H}(x) \le \delta_{0} \}$, with $\delta_{0}>0$ sufficiently small, is a tubular neighborhood of $\de \Omega$. If we substitute such functions in \eqref{eq:1}, by \eqref{1hom} and property \eqref{Hd} we get that 
\[
H(\nabla d_{H}^{-\alpha})= \alpha C_{0} d_{H}^{-\alpha-1}.
\]
Moreover, if $\bar y_{x}$ is the unique minimum point of \eqref{defdist}, that is $d_{H}(x)= H^{o}(x-\bar y_{x})$, then 
\[
\nabla d_{H}(x)=H_{\xi}^{o}(x-\bar y_{x})
\]
(see \cite[Prop. 3.3]{cm07}), and then by \eqref{eq:HH0} we have
\[
H_{\xi}(\nabla d_{H}(x))= H_{\xi}(H_{\xi}^{o}(x-\bar y_{x}))=\frac{x-\bar y_{x}}{H^{o}(x-\bar y_{x})}.
\]
Moreover, using \eqref{0-om}, we finally have 
\begin{equation}
\label{passaggio}
H_{\xi}(\nabla d_{H}^{-\alpha})=-H_{\xi}(\nabla d_{H})=-\frac{x-\bar y_{x}}{H^{o}(x-\bar y_{x})}.
\end{equation}
Hence, computing the anisotropic Laplacian and using \eqref{passaggio} and \eqref{eq:eul} it follows that
\begin{multline*}
\Delta_{H} \left(C_{0}\,d_{H}^{-\alpha}\right)= - C_{0}\,\alpha\, 
\divergenza\left[d_{H}(x)^{-\alpha-1}H_\xi(\nabla d_H(x))\right]= \\
= C_0 \alpha(\alpha+1) d_H(x)^{-\alpha-2}   \frac{H^o_\xi(x-\bar y_x)\cdot(x-\bar y_{x})}{H^{o}(x-\bar y_{x})} +\\- C_{0}\,\alpha \, d_H(x)^{-\alpha-1} \sum_{i,j=1}^{n} H_{\xi_i\xi_j}(\nabla d_H(x)) \de_{x_i x_j}d_H(x)= C_0 \alpha(\alpha+1) d_H(x)^{-\alpha-2} -K(x) d_H(x)^{-\alpha-1},
\end{multline*}
where 
\[
K(x)= C_{0}\,\alpha \sum_{i,j=1}^{n} H_{\xi_i\xi_j}(\nabla d_H(x)) \de_{x_i x_j}d_H(x)
\]
is bounded in $\Gamma$, being $H_{\xi\xi}$ bounded on $\{\xi\colon H(\xi)=1\}$, and $d_H\in C^2(\Gamma)$.
Hence 
\begin{multline}
  \label{eq:2}
 -\Delta_H u + H(\nabla u)^q + \lambda u -f =\\= -C_0 \alpha(\alpha+1)
 d_H^{-\alpha-2}  + K(x) d_H(x)^{-\alpha-1}+ C_0^{q}\alpha^q d_H^{-(\alpha+1)q} +\lambda
 C_0d_H^{-\alpha} -f.
\end{multline}
If $f$ is in $L^\infty$, the most explosive term in \eqref{eq:2} is
\begin{equation*}
  \label{alpha}
  -C_0 \alpha(\alpha+1) d_H^{-\alpha-2}+C_0^{q}\alpha^q d_H^{-(\alpha+1)q}.
\end{equation*}
If $q<2$, this leads to the choice of
\begin{equation}
	\label{alphaC0}
	\alpha= \frac{2-q}{q-1},\quad C_0= \frac 1 \alpha
	(\alpha+1)^\frac{1}{q-1},
\end{equation}
otherwise, for $q=2$, $u(x)=-C_{0}\log d_{H}$, and it leads to the choice
of $C_{0}=1$.

We construct, by means of the signed distance function $d^s_{H}(x)$, defined in \eqref{disegno}, a suitable family of subsolutions and supersolutions of \eqref{eq:1}. To this aim, recalling that $d^s_H(x)\in C^2(A_\mu)$, where $A_\mu$ is given in Theorem \ref{cmteo}, it is possible to construct a function $d(x)$ in $C^{2}(\R^{n})$ such that
\begin{equation}
\label{proldist}
\left\{
	\begin{array}{ll}
 		d(x)=d_{H}(x) & \text{if }x\in \bar \Omega,\text{ and }d_{H}(x)\le
			\delta_{0}, \\[.1cm]
		d(x) \ge \delta_{0} & \text{if }x\in \Omega,\text{ and }d_{H}(x)> 	
			\delta_{0}, \\[.1cm]
		d(x)=-\widetilde d_{H}(x)&\text{if }x\not\in \bar \Omega,\text{ and }\widetilde d_{H}(x)	
			\le \delta_{0}, \\[.1cm]
		d(x) \le -\delta_{0} & \text{if }x\not\in \Omega,\text{ and }
			\widetilde d_{H}(x)> \delta_{0},
	\end{array}
\right.
\end{equation}
where $\delta_0$ is a positive constant smaller than $\mu$.
Hence, if $q<2$, for $\eps\ge 0$ and $\delta$ such that $0\le \delta \le
\delta_{0}$, we define
\begin{equation}
\label{subsuper}
	\begin{array}{ll}
	\ws(x) = (C_{0}-\eps)(d(x)+\delta)^{-\alpha}
	 	- C_{\eps}\quad x\in \Omega^{\delta}, \\[.15cm]
	\wS(x) = (C_{0}+\eps)(d(x)-\delta)^{-\alpha} 		
		+ C_{\eps} \quad x \in \Omega_{\delta},
	\end{array}
\end{equation}
where $C_{\eps}$ is a constant which will be chosed later, and
\[
	\begin{array}{l}
		\Omega^{\delta}:=\{x\in \R^{n}\colon d(x)\ge -\delta \} \supset \Omega ,	\\[.1cm] 
		\Omega_{\delta}: =\{ x \in \Omega \colon d (x) > \delta \} \subset \Omega.
	\end{array}
\]
\begin{figure}[tb]
\centering
\includegraphics[scale=.34]{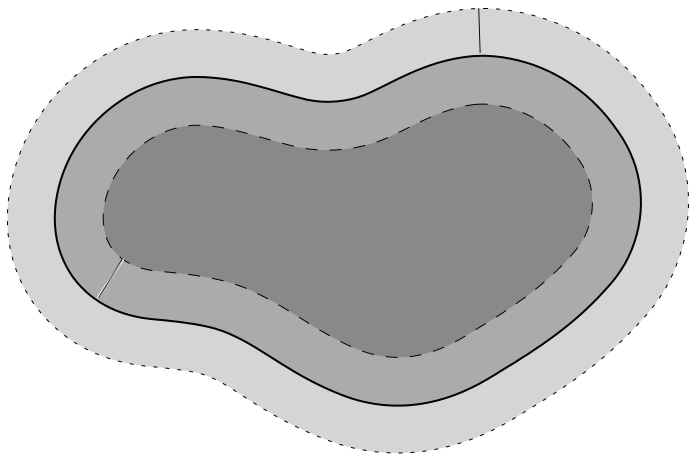}
  \put(-5cm,2.6cm){\footnotesize $\Omega_{\delta}$}
  \put(-2.1cm,1.44cm){\footnotesize $\Omega$}
  \put(-7cm,4.6cm){\footnotesize $\Omega^{\delta}$}
  \put(-2.5cm,5cm){\footnotesize $\delta$}
  \put(-7.3cm,2.1cm){\footnotesize $\delta$}
\end{figure}
If $q=2$, the functions $(d\pm \delta)^{-\alpha}$ in \eqref{subsuper} have to be
substituted with $-\log (d\pm \delta)$.

For suitable choices of $C_{\eps}$, the functions in \eqref{subsuper} are a
super and subsolution of \eqref{eq:1} in $\Omega^{\delta}$ and $
\Omega_{\delta}$, respectively (we may assume $f\equiv 0$ in $
\Omega^{\delta}\setminus\Omega$). Indeed, for $\alpha$ and $C_{0}$ as in \eqref{alphaC0}, 
we get
\begin{equation*}
\begin{split}
	-\Delta_{H} \wS&+H(\nabla \wS)^{q} + \lambda \wS - f = \\
	&= -\alpha(\alpha+1)(C_{0}+\eps)(d-\delta)^{-\alpha-2} H(\nabla d)^{2}
	  +\alpha( C_{0}+\eps )(d-\delta)^{-\alpha-1}\Delta_{H} d + \\
	&{ }\phantom{mm}+\alpha^{q}(C_{0}+\eps)^{q}(d-\delta)^{-q(\alpha+1)} H(\nabla d)^{q}+
	\lambda (C_{0}+\eps)(d-\delta)^{-\alpha} + \lambda C_{\eps} - f \ge \\
	&\ge \alpha(\alpha+1)(C_{0}+\eps)(d-\delta)^{-\alpha-2} 
	\left[\left(1+\frac{\eps}{C_{0}}  \right)^{q-1} H(\nabla d)^{q}-
	H(\nabla d)^{2} \right] +\\
	&{ }\phantom{spaziospiospaziospaziospaziospaziospazio}+
	\lambda C_{\eps}-\bar C(1+(d-\delta)^{-\alpha-1})\ge \\
	&\ge \nu \eps (d-\delta)^{-\alpha-2}+\lambda 
	C_{\eps}-C(1+(d-\delta)^{-\alpha-1}),
\end{split}
\end{equation*}
for some $\nu>0$ and $C>0$. We stress that $\Delta_H d=\frac 1 2 \divergenza ((H^2)_\xi(\nabla d))$ is bounded in $\Omega_\delta$ being $d\in C^2(\R^n)$ and $\nabla^2_\xi H^2\in L^\infty(\R^n)$. 

By choosing $C_{\eps}$ sufficiently large, last term in the above inequalities is nonnegative, and $\wS$ is a supersolution of \eqref{eq:1} in $\Omega_{\delta}$. The same argument shows that $\ws$ is a subsolution of \eqref{eq:1} in $\Omega^{\delta}$.
Now, fixed $M>0$, let us consider the approximating problem
\begin{equation}
	\label{sopra}
	\left\{
		\begin{array}{ll}
			-\Delta_{H}u_{M} +H(\nabla u_{M})^{q}+\lambda u_{M}= f&\text{in }\Omega,\\[.2cm]
			u_{M}=\underline w_{\eps, 1/ M}  &\text{on }\de\Omega.
		\end{array}
	\right.
\end{equation}
Observe that  $w_{\eps,\frac 1 M}=(C_{0}-\eps)M^{\alpha}-C_{\eps}=:C_{\eps,M}$ on $\de \Omega$. Then, 
performing the change of unknown $v_{M}=u_{M}- (C_{0}-\eps)M^{\alpha}-C_{\eps}$, problem \eqref{sopra} can be rewritten as
\begin{equation}
\label{soprazero}
	\left\{
		\begin{array}{ll}
			-\Delta_{H}v_{M} +H(\nabla v_{M})^{q}+\lambda v_{M}= f-\lambda C_{\eps,M} &\text{in }\Omega,\\[.2cm]
			v_{M}=0   &\text{on }\de\Omega.
		\end{array}
	\right.
\end{equation}
Problem \eqref{soprazero} admits a sub and supersolution in $L^{\infty}(\Omega)$ (it is sufficient to take two suitable constants).  Under hypotheses \eqref{crescita} and \eqref{convex}, by \cite[Theorem 2.1]{bmp1}, we get that problem \eqref{soprazero} admits a weak solution $v_{M}\in W_{0}^{1,2}(\Omega)\cap L^{\infty}(\Omega)$, namely $v_{M}$ satifies
\begin{multline*}
\int_{\Omega}\big[ H(\nabla v_{M})H_{\xi}(\nabla v_{M}) \cdot \nabla \varphi+ H(\nabla v_{M})^{q}\varphi+\lambda v_{M}\varphi\big]\,dx= \int_{\Omega} \left(f-\lambda C_{\eps,M}\right)\varphi\,dx,\\ \forall \varphi\in W_{0}^{1,2}(\Omega)\cap L^{\infty}(\Omega).
\end{multline*}
Then also \eqref{sopra} admits a weak solution $u_{M}\in W^{1,2}(\Omega)\cap L^{\infty}(\Omega)$. Moreover, such solutions are in $W^{2,2}(\Omega)$ (see \cite{tk84}, and the remarks contained in \cite{ciasal,cfv}), and in $C^{1,\vartheta}(\bar\Omega)$ (see \cite{ladyz,Lieberman1988}).
Now we apply the comparison principle contained in \cite[Theorem 3.1]{bbgk} (see also \cite[Theorem 3.1]{bm95}). We stress that the hypothesis (22) in \cite{bbgk} holds, because the function $(H(\xi) H_{\xi}(\xi))_{\xi}$ is $0$-homogeneous, and then
\[
(H(\xi) H_{\xi}(\xi))_{\xi}\, \xi - H(\xi) H_{\xi}(\xi)=0.
\]
Hence we have that, for $0<M<N$ and for any $\eps' > 0$,
\begin{equation}
\label{sopsot}
	\underline w_{\eps,\, 1/M} \le u_{M}\le u_{N} \le
	\overline{w}_{\eps',\,0}\quad \text{in }\Omega.
\end{equation}
 Last inequality in \eqref{sopsot} follows observing that $u_{N}<\overline w_{\eps',0}$ near the boundary of $\Omega$, being $u_{N}$ is finite on $\de \Omega$, while $\overline w_{\eps',0}|_{\de \Omega}=+\infty$, and then using the comparison principle. 
Hence \eqref{sopsot} gives that the functions $u_{M}$, $M>0$ are uniformly bounded in $L^{\infty}_{\textrm{loc}}(\Omega)$. This estimate, since $f\in L^{\infty}_{\textrm{loc}}(\Omega)$, allows to apply \cite[Theorem 1]{tk84} in order to obtain a $W^{1,\infty}_{\textrm{loc}}(\Omega)$ estimate. 
Actually, in any compact set $\Omega'\Subset \Omega$, by \cite[Theorem 1]{tk84} we have
\[
|\nabla u_{M}(x)-\nabla u_{M}(x')| \le C |x-x'|^{\vartheta},\quad \forall\, x,x'\in \Omega',
\]
where $C$ is a constant which depends only on $n, \gamma,\Gamma, \Omega', \vartheta$ and on the $L^{\infty}$ bound of $u_{M}$ in $\Omega'$. Then by Ascoli-Arzel\`a Theorem $u_{M}$, as $M\rightarrow +\infty$, converges locally uniformly to a function $\underline u\in C^{1}(\Omega)$. Moreover, $\underline u$ is a weak solution of \eqref{eq:1} and, recalling \eqref{sopsot},
 \begin{equation}
 \label{compa}
 \underline w_{\eps,0} \le \underline u \le \overline w_{\eps',0}, \quad
	\forall \eps'>0.
 \end{equation}

Using again \cite{tk84}, $\underline u\in W^{2,2}_{\textrm{loc}}(\Omega)\cap C^{1,\vartheta}_{\textrm{loc}}(\Omega)$. Then, by the chain rule for vector-valued functions contained in  \cite{mm72}, we have that $\underline u$ is a strong solution of \eqref{eq:1}. 

\textcolor{black}{ As matter of fact, $\underline u\ge \underline w_{\eps',0}$, for any $\eps'>0$.} By comparison principle, if $v\in W^{2,2}_{\textrm loc}(\Omega)$ is another solution of \eqref{eq:1} which blows up on the boundary, then $u_{M}\le v$. Hence, $\underline u$ is the minimal blow up solution. 

Next step consists in constructing a maximum blow-up solution of \eqref{eq:1}. To this end, we may argue as before to get the minimal solution $\underline u_{\delta}$ of \eqref{eq:1} in $\Omega_{\delta}$ which diverges on $\de\Omega_{\delta}$. We have that
\begin{equation}
\label{delirio}
	\ws \le \underline u_{\delta} \le \wS, \quad \forall \eps>0.
\end{equation}
Moreover, if $v\in W^{2,2}_{\textrm loc}(\Omega)$ is any blow up solution of \eqref{eq:1}, being $v$ bounded on $\de \Omega_\delta$, we have that 
\begin{equation}
\label{botte}
v\le \underline u_{\delta}.
\end{equation}
Passing to the limit as $\delta\rightarrow 0$ in \eqref{delirio}, using \eqref{compa} and \eqref{botte}, reasoning as before we get a maximal blow-up solution $\overline u=\lim_{\delta \to 0}\underline u_{\delta}$ of \eqref{eq:1} such that
\begin{equation}
	\label{catena}
	\underline w_{\eps,0} \le \underline u \le v\le  \overline u \le 	
	\overline w_{\eps,0}
\end{equation}
 for any $\eps>0$. As matter of fact, we claim that
 \[
	 \underline u=\overline u.
 \]
 Indeed, by \eqref{catena} it follows that
 \[
 	\lim_{d(x)\rightarrow 0} \frac{\overline u(x)}{\underline u(x)}= 1.
 \]
 Hence, being $\overline u(x)$ and $\underline u(x)$ divergent near the boundary, we get that for any $\theta \in ]0,1[$ there exists a neighbourhood of $\de \Omega$, dependent on $\theta$, in which
 \begin{equation*}
 \underline u(x) > \theta \overline u(x) + (1-\theta)\frac{m}{\lambda}=:w_{\theta}(x),
 \end{equation*}
 with $m=\inf_{\Omega} f$. The function $w_{\theta}$ is a subsolution of \eqref{eq:1}, and by maximum principle $w_{\theta}\le \underline u$ in all $\Omega$. As $\theta \rightarrow 1$, we get that $\overline u\le \underline u$ in $\Omega$, and we get the claim.
 
 We further emphasize that inequality \eqref{botte} clearly holds also if $v\in W^{2,2}_{\textrm{loc}}(\Omega)$ is any subsolution of problem \eqref{eq:1}. Passing to the limit, we get $v\le \underline u$. \\

{\bf Case 2:} $f$ unbounded. The proof runs analogously as in the previous case, except what concerns the existence of the minimum explosive solution. Indeed, if $f\sim C_{1} d_{H}(x)^{-q'}$ or $f=o(d_{H}^{-q'})$ near $\de \Omega$, substituting $u(x)=C_{0}d_{H}(x)^{-\alpha}$ in \eqref{eq:1}, with $\alpha= (2-q)/(q-1)$, we have that the most explosive term in \eqref{eq:2}, when $x$ approaches the boundary, is
\[
\left[ \left(\frac{2-q}{q-1} \right)^{q} C_{0}^{q}
	 - \frac{2-q}{(q-1)^{2}}C_{0}-C_{1} \right]	 d_H^{-q'}(x).
\]
Hence, as before we can construct a maximum explosive solution $\bar u$ of \eqref{eq:1} such that
\begin{equation}
	\label{bound2}
	(C_{0}-\eps) d^{-\alpha}- C_{\eps} \le \bar u \le (C_{0}+\eps)d^{-\alpha} 	+C_{\eps},
\end{equation}
 where $d$ is the function defined in \eqref{proldist}. As regards the existence of the minimum solution, differently from the bounded case we have that
$\underline w_{\eps,\delta}$ defined in \eqref{subsuper} is a subsolution of \eqref{eq:1} in $\Omega^{\delta}$, with $f$ replaced by
\[
f_\delta=
\begin{cases}
\min\left\{f, C_{2}+C_{3}(d+\delta)^{-q'}\right\} & \text{ in }\Omega,
\\[.2cm]
 C_{2}+C_{3}(d+\delta)^{-q'}& \text{ in }\Omega^{\delta}\setminus \Omega,
\end{cases}
\]
with $C_{2},C_{3}$ are positive constant such that $C_{3}>C_{1}$, and $C_{2}+C_{3}d^{{-q'}}>f$ in $\Omega$. Now, $f_{\delta}$ is bounded in $\Omega$, and from the first case we get that there exists a unique explosive solution $u_{\delta}$ of \eqref{eq:1} with $f$ replaced by $f_{\delta}$, and $u_{\delta} \ge \underline w_{\eps,\delta}$. Hence, being $f\ge f_{\delta}$, the comparison principle gives that $\bar u \ge u_{\delta}$. Passing to the limit, we obtain a minimal solution $\underline u(x)=\lim_{\delta\rightarrow 0}u_{\delta}(x)$ of \eqref{eq:1}, with $\underline u\le \bar u$, that satisfies \eqref{bound2}. Again, the uniqueness and the comparison with subsolutions follows as before.
\end{proof}
\begin{rem}
\label{remgradbound}
We observe that by taking a closer look to the proof of Theorem \ref{fpocodiv}, we are able  to conclude that the thesis of the Theorem \ref{gradbound} holds also if $f\in W^{1,\infty}_{\textrm{loc}}(\Omega)$ and \eqref{condf1} is satisfied. Indeed, by using the approximating problems
\begin{equation*}
	\left\{
		\begin{array}{ll}
			-\Delta_{H}\tilde u_{M} +H(\nabla \tilde u_{M})^{q}+\lambda \tilde u_{M}= f_{M}&\text{in }\Omega,\\[.2cm]
			\tilde u_{M}=\underline w_{\eps, 1/ M}  &\text{on }\de\Omega,
		\end{array}
	\right.
\end{equation*}
with $f_{M}$ sequence of smooth functions such that $f_{M}\rightarrow f$ in $W^{1,\infty}_{\textrm{loc}}(\Omega)$, the solutions $\tilde u_{M}$ are uniformly bounded in $L^{\infty}_{\textrm{loc}}(\Omega)$ and converge, up to a subsequence, to the unique blow-up solution $u$ of problem \eqref{eq:gb}. Then applying the bound \eqref{bellissima} in $\Omega_{\delta}$ to $\tilde u_{M}$ and passing to the limit we get the same bound also for $u$. 
\end{rem}
 
\begin{proof}[Proof of Theorem \ref{fmoltodiv}] 
The main part of the proof relies in the following statement.
\begin{claim}
If \eqref{fmoltodivcond} holds, then any solution of \eqref{eq:1} in $W^{2,2}_{\textrm{loc}}(\Omega)$, which is bounded from below, blows up when $d_{H}\rightarrow 0$.
\end{claim}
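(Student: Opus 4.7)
The plan is to argue by contradiction, in the spirit of the construction in the proof of Theorem \ref{fpocodiv}. Suppose $u\in W^{2,2}_{\textrm{loc}}(\Omega)$ is a solution of \eqref{eq:1} bounded from below by some $-M$, but $u$ does not blow up at $\de\Omega$: then there exist $\bar x\in\de\Omega$ and a sequence $x_n\to\bar x$ along which $u(x_n)$ stays bounded.

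The first step is to construct a family of explicit blow-up subsolutions adapted to the stronger growth condition \eqref{fmoltodivcond}. Mimicking the computation leading to \eqref{eq:2}, consider the ansatz $w(x)=C\,d_{H}(x)^{-\gamma}$. The exponent is chosen to balance the dominant terms: for $\beta>q'$ we take $\gamma=\beta/q-1$, so that the gradient term $C^{q}\gamma^{q}d_{H}^{-q(\gamma+1)}=C^{q}\gamma^{q}d_{H}^{-\beta}$ is of the exact order of $f$; for $\beta=q'$ one recovers the choice $\gamma=(2-q)/(q-1)$ with the constant $C_0$ of Theorem \ref{fpocodiv}. Picking $C>0$ small enough so that $C^{q}\gamma^{q}\le \liminf_{d_H\to 0} f\, d_H^{\beta}$, the ansatz gives a subsolution of \eqref{eq:1} in a tubular neighborhood $\{d_H<\delta_0\}$. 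As in \eqref{subsuper}, we then introduce the regularized family $\underline w_{\eps,\delta}(x)=(C_{0}-\eps)(d(x)+\delta)^{-\gamma}-C_{\eps}$ on $\Omega^{\delta}$, which are genuine subsolutions of \eqref{eq:1} that stay finite up to $\de\Omega$.

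The second step is the comparison. Subtracting a sufficiently large positive constant (which only lowers the left-hand side of the subsolution inequality), one produces shifted subsolutions $\tilde w_{\eps,\delta}$ satisfying $\tilde w_{\eps,\delta}\le -M\le u$ on $\de\Omega$. Applying the comparison principle of \cite{bbgk,bm95} between the solution $u$ and the subsolution $\tilde w_{\eps,\delta}$ on $\Omega$ yields a pointwise lower bound on $u$ in terms of $\underline w_{\eps,\delta}$. In the limit $\delta\to 0$, by the same passage-to-the-limit argument used to produce the minimum solution in Theorem \ref{fpocodiv}, one obtains $u\ge \underline w_{\eps,0}=(C_0-\eps)d_H^{-\gamma}-C_\eps$ in $\Omega$. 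Evaluating at $x_n$ gives $u(x_n)\to +\infty$, contradicting the boundedness of $u(x_n)$.

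The main difficulty lies in the second step, because naively subtracting a $\delta$-dependent constant kills the blow-up in the limit $\delta\to 0$. The resolution is exactly parallel to the construction of the minimum blow-up solution $\underline u$ in the proof of Theorem \ref{fpocodiv}: one must compare $u$ not directly with $\underline w_{\eps,\delta}$ but with the solutions $u_{M}$ of the auxiliary Dirichlet problems having $\underline w_{\eps,1/M}$ as boundary datum, for which the boundary comparison is automatic and the shift drops out in the limit. A second point requiring care is that, for $\beta>q'$, the gradient bound of Theorem \ref{gradbound2} is not directly available; however, the argument above only uses the comparison principle and the $W^{2,2}_{\textrm{loc}}$ regularity of $u$, so the conclusion follows without invoking the gradient estimates.
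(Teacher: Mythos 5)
There is a genuine gap in your second step, and it is exactly at the point you flag. Your subsolutions $\underline w_{\eps,\delta}$ (and likewise the auxiliary Dirichlet solutions $u_M$ with boundary datum $\underline w_{\eps,1/M}$, which equal the large constant $C_{\eps,M}$ on $\partial\Omega$) can only be compared with $u$ on a region whose boundary includes $\partial\Omega$, and there the comparison inequality ``subsolution $\le u$'' is precisely the information we do not have: all we know about $u$ near $\partial\Omega$ is a lower bound $-M$, not that it exceeds the boundary values of $u_M$ or of $\underline w_{\eps,\delta}$. In the proof of Theorem \ref{fpocodiv} the inequalities $u_M\le v$ and $v\le \underline u_\delta$ are obtained only for functions $v$ \emph{already known} to blow up at the relevant boundary (that is what makes ``$v>u_M$ near $\partial\Omega$'' automatic); here the blow-up of $u$ is the conclusion, so invoking that comparison is circular. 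The only legitimate repair within your scheme is to shift the subsolution down by a $\delta$-dependent constant so that it lies below $-M$ on $\partial\Omega$, and, as you yourself note, that destroys the bound as $\delta\to0$. Your closing remark that ``the boundary comparison is automatic'' for the $u_M$ is therefore not correct, and the argument does not close.

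The paper's proof avoids the boundary entirely and the mechanism is different: normalizing $u\ge0$ and using \eqref{fmoltodivcond} in the form $f\ge \tilde K d_H^{-q'}$, one fixes $x_0$ with $d_H(x_0)=2r$ and compares $u$ on the Wulff ball $\mathcal W_r(x_0)\Subset\Omega$ with the solution $\tilde u_r$ of the Dirichlet problem with zero boundary data and constant right-hand side $Kr^{-q'}$; here the boundary comparison $u\ge 0=\tilde u_r$ on $\partial\mathcal W_r(x_0)$ is free. Rescaling $u_r(x)=r^{\alpha}\tilde u_r(rx+x_0)$, $\alpha=(2-q)/(q-1)$, gives a problem on the unit Wulff shape with data independent of $r$, a uniform $L^\infty$ bound (Stampacchia), and a limit radial profile that is strictly positive at the origin; hence $u(x_0)\gtrsim r^{-\alpha}$, which yields blow-up for $q<2$. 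For $q=2$ the scaling is neutral ($\alpha=0$) and one must iterate the argument, bootstrapping the lower bound by the fixed amount $K_0$ at each step — a case your power-type ansatz does not handle either (for $\beta=q'=2$ your exponent $\gamma$ degenerates to $0$). So the interior lower bound comes from the size of $f$ on small interior balls forcing the Dirichlet solution itself to be large at the center, not from a subsolution anchored at $\partial\Omega$; some device of this interior type is needed, and your proposal as written would not produce the claim.
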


Once we prove the claim, the thesis of the theorem follows by adapting the proof contained in \cite[Theorems III.2 and III.3]{ll89} and the arguments used in Theorem \ref{fpocodiv} in order to construct a minimum and a maximum solution and, under the additional hypothesis \eqref{boh2}, that such solutions coincide.

In order to prove the claim, we may suppose, without loss of generality, that $u\ge 0$ in $\Omega$ and $f\ge \tilde K d_{H}^{-q'}$ for some positive constant $\tilde K$. Let $x_{0}$ be a point in $\Omega$ such that $d_{H}(x_{0})=2 r$. Hence $\mathcal W_{r} (x_{0})\Subset \Omega$, and from the equation we get that
	\[
	\left\{
	\begin{array}{ll}
		-\Delta_{H} u+H(\nabla u)^{q}+\lambda u \ge K r^{-q'}&
		\text{in }\mathcal W_{r}(x_{0})\\
	 	u\ge 0&\text{in }\de \mathcal W_{r}(x_{0}),
	\end{array}
	\right.
	\]
	where $K=2^{-q'}\tilde K$. This means that $u$ is a supersolution of
	\begin{equation}
	\label{above}
	\left\{
	\begin{array}{ll}
		-\Delta_{H} \tilde u_{r}+H(\nabla \tilde u_{r})^{q}+
		\lambda \tilde u_{r} = K r^{-q'}&
		\text{in }\mathcal W_{r}(x_{0})\\
	 	\tilde u_{r}= 0&\text{in }\de \mathcal W_{r}(x_{0})
	\end{array}
	\right.
	\end{equation}
 and, obviously, $w=0$ is a subsolution of \eqref{above}. Applying again \cite[Theorem 2.1]{bmp1} and \cite{tk84}, there exists a strong solution $\tilde u_{r}\in W^{2,2}(\mathcal W_{r}(x_{0}))\cap C^{1,\vartheta}(\overline{\mathcal W_{r}(x_{0})})$ of \eqref{above}.  Hence, $u(x) \ge \tilde u_{r}(x)\ge 0$ in $\mathcal W_{r}(x_{0})$. Defining $u_{r}(x)=r^{\alpha}\tilde u_{r}(rx+x_{0})$, for $x\in \mathcal W_{1}(0)=\mathcal W$, with $\alpha=(2-q)/(q-1)$, it follows that $u_{r}$ solves
		\begin{equation}
	\label{above1}
	\left\{
	\begin{array}{ll}
		-\Delta_{H} u_{r}+H(\nabla u_{r})^{q}+
		\lambda r^{2}u_{r} = K&
		\text{in }\mathcal W\\
	 	 u_{r}= 0&\text{in }\de \mathcal W.
	\end{array}
	\right.
	\end{equation}
For $k>0$, multiplying the above equation by $(u_{r}-k)^{+}$ and integrating, we easily get, by \eqref{eq:eul}, \eqref{crescita}, and being $u_{r}\ge 0$, that
\[
a \int_{u_{r}>k} |\nabla u_{r}|^{2} dx \le \int_{\mathcal W} H(\nabla u_{r}) H_{\xi}(\nabla u_{r})\cdot \nabla(u_{r}-k)^{+} dx \le K |\{u_{r}>k\}|,
\]
and, for $h>k$,
\[
|\{u_{r}>h\}| \le C (h-k)^{-2^{*}}|\{u_{r}>k\}|^{2^{*}/2},
\]
where $C$ is a constant independent on $r$. Hence, the classical Stampacchia Lemma (see \cite{stamp}) assures that $u_{r}$ is uniformly bounded in $L^{\infty}(\mathcal W)$.  Moreover, by \cite{bbgk} $u_{r}$ is the unique bounded solution of \eqref{above1}, which is also radial with respect to $H^{o}$, due to the symmetry of the data. That is, $u_{r}(x)=U_{r}(H^{o}(x))$, $x\in \mathcal W$.

Reasoning as in Theorem \ref{fpocodiv} we get that $u_{r} \rightarrow u_{0}\in W_{\textrm loc}^{2,2}(\mathcal W)$, where $u_{0}(x)=U_{0}(H^{o}(x))$, $x\in \mathcal W$ solves
\begin {equation*}
	\left\{
	\begin{array}{ll}
		-\Delta_{H} u_{0}+H(\nabla u_{0})^{q} = K &
		\text{in }\mathcal W,\\
	 	 u_{0}= 0&\text{in }\de \mathcal W.
	\end{array}
	\right.
	\end {equation*}
As a matter of fact, $U_{0}$ solves the problem
 \begin {equation*}
	\left\{
	\begin{array}{ll}
		-U_{0}''- \dfrac {n-1}{r} U_{0}' + |U_{0}'(r)|^{q} = K &
		\text{in }[0,1],\\[.3cm]
	 	 U_{0}(1)= 0,\; U_{0}'(0)=0. 
	\end{array}
	\right.
	\end {equation*}
Hence, by the maximum principle $U_{0}(0)=u_{0}(0)>0$. This implies that, for $q<2$, $u(x)$ diverges as $d_{H}\rightarrow 0$. As regards the case $q=2$,
	this method allows only to say that
	\[
	\liminf_{d_{H}\rightarrow 0} u \ge u_{0}(0)=K_{0}.
	\]
	As matter of fact, arguing as in \cite{ll89}, we have that for any $\eps>0$ there exists $s_{\eps}>0$ such that for $x\in \Omega$ with $d_{H}(x)<s_{\eps}$, then $u(x)\ge K_{0}-\eps$. Now, putting $v=u-(K_{0}-\eps)$, and repeating exactly the above argument for $v$ (at least for $2r<s_{\eps}$), we get that
	\[
	\liminf_{d_{H}\rightarrow 0} u\ge K_{0}+K_{0}-\eps=2K_{0}-\eps.
	\]
	Letting $\eps$ go to zero, and iterating the argument, we get that $u$ diverges as $d_{H}\to 0$ also if $q=2$.
\end{proof}

\begin{proof}[Proof of Theorem \ref{teoerg}]
The argument of the proof of Theorem \ref{fpocodiv} allows to obtain that the solution $u_\lambda$ of problem \eqref{eq:1} satisfies, if $1<q<2$,
\begin{equation}
	\label{bound}
	\frac{C_{0}-\eps}{d^{\alpha}} -\frac{C_{\eps}}{\lambda} \le u_{\lambda} \le
	\frac{C_{0}+\eps}{d^{\alpha}} +\frac{C_{\eps}}{\lambda},
\end{equation}
for all $\eps>0$, $\lambda\in ]0,1]$, and for some $C_{\eps}>0$. In the case $q=2$, the functions
$d^{-\alpha}$ have to be replaced with $|\log d|$. By \eqref{bound}, $\lambda u_{\lambda}$ is uniformly bounded from below and in $L^{\infty}_{\textrm{loc}}(\Omega)$. Moreover using Theorem \ref{gradbound} and Remark \ref{remgradbound} we get that also $\nabla u_{\lambda}$ is uniformly bounded  in $L^{\infty}_{\textrm{loc}}(\Omega)$. Then, $v_{\lambda}=u_{\lambda}(x)-u_{\lambda}(x_{0})$,  for some fixed $x_{0}\in\Omega$, is uniformly bounded with respect to $\lambda\in]0,1]$ in $W_{\textrm{loc}}^{1,\infty }(\Omega)$. Hence, for any $\Omega'\Subset\Omega$, there exists a constant $C_{\Omega'}$ independent on $\lambda$ such that
\[
|u_{\lambda}(x)-u_{\lambda}(x_{0})|\le C_{\Omega'}|x-x_{0}|.
\]
Passing to the limit we obtain, up to a subsequence, the convergence of $\lambda u_{\lambda }(x_{0})$ to a constant $u_{0}$ and of $\lambda v_{\lambda}$ to $0$. We finally prove that $v_{\lambda}$ converges to a blow-up solution of \eqref{eqerg}.
 First observe that $v_{\lambda }$ satisfies the following equation in $\Omega $:
\begin{equation}
\label{eq_v}
-\Delta _{H}v_{\lambda }+H\left(  \nabla v_{\lambda }\right) ^{q}+\lambda v_{\lambda }+\lambda u_{\lambda }\left( x_{0}\right) =f.
\end{equation}
 Hence, using again the arguments of the proof of the previous results, we can pass to the limit in \eqref{eq_v}, obtaining that $v_{\lambda}$ converges to a solution 
$v\in W^{2,2}_{\textrm{loc}}(\Omega)$ of the problem \eqref{eqerg}.

Now we prove a lower bound for $v$. Let $z=\frac{C_{1}}{d^{\alpha }}$, with $C_{1}\in ]0,C_{0}[$ fixed. Then in a sufficiently small inner tubular neighbourhood of $\de \Omega$, namely $\Omega\setminus \Omega_{\delta_{0}}$, we have that
\begin{equation*}
-\Delta _{H}z+H\left( \nabla z \right) ^{q}+\lambda
z\le f-\lambda u_{\lambda }\left( x_{0}\right).
\end{equation*}
 On the other hand, $v_{\lambda}$ is bounded from below in $\Omega_{\delta_{0}}$, namely there exists a constant $M\ge 0$ such that
\begin{equation*}
v_{\lambda }\ge -M\quad\text{ on }\quad\Omega _{\delta_{0}}.
\end{equation*}
Adapting the methods used in Theorem \ref{fpocodiv} it is possible to obtain that
\begin{equation}
\label{uff}
v_{\lambda }\ge -M+z= -M+\frac{C_{1}}{d^{\alpha }}\quad\text{  on  }\quad\Omega .
\end{equation}%
 Passing to the limit, also $v$ satisfies \eqref{uff}.
 
  Now we show that for any couple $(\tilde u_{0},\tilde v)$ of problem \eqref{eqerg}, with $\tilde v$ such that blows up at the boundary, $\tilde v$ diverges as in \eqref{roc}. 
To this aim, it is possible to consider $\overline w_{\eps,\delta}$ as in \eqref{subsuper} which is supersolution of the ergodic equation \eqref{eqerg} in 
$\Omega_{\delta}\setminus \Omega_{\delta_{0}}$, for some $0<\delta<\delta_{0}=\delta_{0}(\eps)$. Hence, by the comparison principle, and letting $\delta$ go to zero, we can conclude that
\begin{equation}
\label{boundtilde}
-C \le \tilde v \le \overline w_{\eps,0} + \max_{d=\delta_{0}(\eps)} |\tilde v| = (C_{0}+\eps)d^{-\alpha} + \max_{d=\delta_{0}(\eps)} |\tilde v| \quad\text{in }\Omega.
\end{equation}
Hence, $\tilde v$ is such that $-\Delta_{H}\tilde v+H(\nabla \tilde v)^{q} +\tilde v = g$, with $g=f-\tilde u_{0}+ \tilde v$. The bounds in  \eqref{boundtilde} and the condition \eqref{condfmd} assure that $g\in L^{\infty}_{\textrm{loc}}(\Omega)$ and also satisfies \eqref{condfmd}. By Theorem \ref{fpocodiv} we get that $\tilde v$ satisfies \eqref{roc}.

Now we show that if $(\tilde u_{0}, \tilde v)\in \mathbb R\times W^{2,2}_{\textrm{loc}}(\Omega)$ is a couple which solves \eqref{eqerg} and $\tilde v$ blows up at the boundary, then $\tilde u_{0}=u_{0}$ and $\tilde v= v+C$, for some constant $C\in\R$. 
 
As regards the uniqueness of the ergodic constant $u_{0}$, the proof runs similarly as in \cite{ll89}, supposing by contradiction that $u_{0}<\tilde u_{0}$. Let us choose $\eps>0$, and $0<\theta<1$. 
First, observe that obviously $v$ satisfies
\begin{equation}
\label{parpa}
-\Delta_{H} v +H(\nabla v)^{q} + \eps v = f+\eps v-u_{0}\quad\text{a.e. in }\Omega.
\end{equation}
On the other hand, we have from the $1$-homogeneity of $H$ that 
 \[
-\Delta_{H}(\theta \tilde v) +H(\nabla(\theta \tilde v))^{q} +\eps \theta \tilde v 
\le f +C(1-\theta) +\eps \theta \tilde v -\theta \tilde u_{0}.
\]
Moreover, since $v$ and $\tilde v$ diverge as $d^{-\alpha}$ near to the boundary of $\Omega$, then $\theta \tilde v\le C_{\theta}+ v$. Hence, from the above inequality it follows that
\begin{equation*}
\begin{split}
-\Delta_{H}(\theta \tilde v) +H(\nabla(\theta \tilde v))^{q} +\eps \theta \tilde v 
&\le 
f+\eps v-u_{0}+(u_{0}-\theta \tilde u_{0})+\eps C_{\theta} + C(1-\theta) \\
&\le  f+\eps v-u_{0}.
\end{split}
\end{equation*}
where last inequality holds for $\theta$ sufficiently near to $1$ and for $\eps=\eps(\theta)$ sufficiently small. Hence, $\theta \tilde v$ is a subsolution of
\eqref{parpa}. By Theorem \ref{fpocodiv}, $\theta \tilde v \le v$. As $\theta \to 1$, $\tilde v\le v$. This is in contradiction with the fact that 
any function of the type $\tilde v+c_{1}$, with $c_{1}\in \R$ solves the  ergodic problem with the same constant $\tilde u_{0}$.
\end{proof}

\begin{proof}[Proof of Theorem \ref{q=2theo}]
The hypothesis $q=2$ allows to perform a suitable change of variable.
Let $v\in W^{2,2}_{\textrm{loc}}(\Omega)$ be a solution of the ergodic equation \eqref{eqerg} with $v=\infty$ on $\de \Omega$. Then the function
\[
w=e^{-v}
\] 
belongs to $W^{1,\infty}_{0}(\Omega)\cap W^{2,2}_{\textrm{loc}}(\Omega)$. Let us verify that $|\nabla w|\in L^{\infty}$. Due the condition \eqref{condfmd}, we have that $C_{0}=1$ in \eqref{roc}, and then $|v|\le \log (d_{H}^{-1})$. Moreover, using also \eqref{eq42} we can apply Theorem \ref{gradbound2}, obtaining that $|\nabla v|d_{H}$ is bounded. Hence
\[
|\nabla w|= |\nabla v|e^{-v} \le C.
\]
Now, observe that using the properties of $H$ it holds that the function $w$ is a $W^{2,2}_{\textrm{loc}}(\Omega)\cap W^{1,\infty}(\Omega)$ solution of 
\begin{equation}
\label{autoval}
\left\{
\begin{array}{ll}
	-\Delta_{H} w + f(x)\, w=u_{0}\,w &\text{in }\Omega,\\
	w=0&\text{on }\de\Omega,\\
	w>0&\text{in }\Omega.
\end{array}
\right.
\end{equation}
The ergodic constant $u_{0}$ is a critical point of the Rayleigh quotient
\[
\mathcal R[\psi]=\dfrac{\displaystyle\int_{\Omega} H(\nabla \psi)^{2}dx+\int_{\Omega}f(x)\psi^{2}dx}{\displaystyle\int_{\Omega}\psi^{2}dx}.
\]
As a matter of fact, we claim that $u_{0}$ is the minimum eigenvalue, namely
\begin{equation*}
u_{0}=\min_{\substack{\psi\in W_{0}^{1,2}(\Omega)\\ u\ne 0}} \mathcal R[\psi],
\end{equation*}
and $u_{0}$ is the only eigenvalue associated to a positive eigenfunction. 
The claim follows observing, first of all, that being $f$ bounded from below, $f\ge -C$, the Rayleigh quotient $\mathcal R[\psi]$ satisfies
\[
\mathcal R[\psi] \ge -C.
\]
Then the existence of the minimum value of $\mathcal R[\psi]$ easily follows by using standand arguments of Calculus of Variations. Moreover the simplicity of $u_{0}$ and the fact that it is the unique eigenvalue associated to a positive eigenfunction follows by adapting the proof contained, for example, in \cite{pota} and \cite{klp}. Hence problem \eqref{autoval} admits, up to a multiplicative constant, a unique solution. This implies that if $v_{1}$ and $v_{2}$ solve \eqref{eqerg}, then $v_{1}$ and $v_{2}$ differ by a constant. 
\end{proof}

\paragraph*{Acknowledgements} 

The authors thank Alessio Porretta for some useful discussions on the topic.\vspace{.2cm}

\noindent This work has been partially supported by the FIRB 2013 project ``Geometrical and qualitative aspects of PDE's'' and by GNAMPA of INdAM.


\end{document}